\documentclass[12pt]{amsart}

\usepackage{amssymb}
\usepackage{amsmath}
\usepackage{amsthm}
\usepackage{amscd}
\usepackage{enumerate}
\usepackage[margin=1.1in]{geometry}
\usepackage{hyperref}
\usepackage{appendix}
\usepackage{dsfont,txfonts}
\usepackage{tikz}
\usetikzlibrary{calc,positioning,fit,backgrounds}

\theoremstyle{plain}
\newtheorem{theorem}{Theorem}[section]
\newtheorem{proposition}[theorem]{Proposition}
\newtheorem{corollary}[theorem]{Corollary}
\newtheorem{lemma}[theorem]{Lemma}

\theoremstyle{remark}

\newtheorem{remark}{Remark}[section]

\DeclareMathOperator{\dom}{dom}

\newcommand{\SG}{\ensuremath{\mathcal{S}}}
\newcommand{\VLN}{\ensuremath{\mathcal{V}_{N,L}}}
\newcommand{\DF}{\mathcal{E}}

\newcommand{\op}{\mathcal{L}}
\newcommand{\degr}{2}
\newcommand{\Wsp}{\ensuremath{\op^{-s}L^{p}}}

\begin{document}

\title{Sobolev algebra counterexamples}

\author{Thierry Coulhon}
\address{PSL Research University, F-75005 Paris, France}
\email{thierry.coulhon@univ-psl.fr}
\thanks{Research of T.C. supported in part by Australian Research Council grant DP 130101302}

\author{Luke~G. Rogers}
\address{Department of Mathematics, University of Connecticut, Storrs, CT 06269-3009 USA}
\email{rogers@math.uconn.edu}

\date{today}
\subjclass[2000]{46E35, 28A80,  31E05}

\begin{abstract}
In the Euclidean setting the Sobolev spaces $W^{\alpha,p}\cap L^\infty$  are algebras for the pointwise product when $\alpha>0$ and $p\in(1,\infty)$.  This property has recently been extended to a variety of geometric settings.  We produce a class of fractal examples where it fails for a wide range of the indices $\alpha,p$.

\tableofcontents
\end{abstract}
\maketitle

\section{Introduction}

We consider a  measure space $(X,\mu)$ equipped with a  non-negative definite, self-adjoint, Markovian operator $\op$ with dense domain in $L^{2}(\mu)$.  
Such operators play the role of the classical Laplacian when studying physical phenomena such as diffusion and waves (e.g. the heat, wave, and Schr\"{o}dinger equations)  and related PDE  on a general space $(X,\mu)$.
The natural setting for such problems is a class of Sobolev spaces associated to $\op$; following the correspondence from the Euclidean setting we define these as Bessel potential spaces, so that the homogeneous Sobolev space $\dot{W}_{\op}^{\alpha,p}$ and the inhomogeneous Sobolev space $W_{\op}^{\alpha,p}$ are as follows
\begin{gather}
	\dot{W}_{\op}^{\alpha,p} = \Bigl\{ f\in L^{p}_{\text{loc}}(X,\mu): \op^{\alpha/\degr}f\in L^{p}(X,\mu)\Bigr\} \text{ with seminorm } \|f\|_{\dot{W}_{\op}^{\alpha,p}} = \| \op^{\alpha/\degr}f\|_{p} \\
	W_{\op}^{\alpha,p} = \Bigl\{ f\in L^{p}(X,\mu): \op^{\alpha/\degr}f\in L^{p}(X,\mu)\Bigr\} \text{ with norm } \|f\|_{W_{\op}^{\alpha,p}} = \|f\|_{p} + \| \op^{\alpha/\degr}f\|_{p} \label{eqn:defnWsp}.
	\end{gather}
The {\em Sobolev algebra problem} asks for conditions under which the spaces $\dot{W}_{\op}^{\alpha,p}\cap L^{\infty}$ or $W_{\op}^{\alpha,p}\cap L^{\infty}$ are algebras under the pointwise product.  This question arises when considering the well-posedness of non-linear PDE based on the differential operator $\op$.   The purpose of this paper is to show that on some fractal spaces  we may take $\op$ to be a natural Laplacian operator and nonetheless the algebra property fails for a wide range of $p$ and $\alpha$. The Sobolev spaces we consider have previously been studied in~\cite{HuZ1,HuZ2}.  Our results are close kin to a result of Ben-Bassat, Strichartz and Teplyaev~\cite{BST} which applies (essentially) to the case $p=\infty$, $\alpha=\degr$, and are in sharp contrast to the behavior of the classical Sobolev spaces on Euclidean spaces.  Indeed, in the case that $\op$ is the non-negative Laplacian $-\Delta$ on $\mathbb{R}^{n}$, Strichartz~\cite{Strich} proved that the classical Bessel potential space $W_{-\Delta}^{\alpha,p}$ is an algebra provided $1<p<\infty$, $\alpha>0$ and $\alpha p>n$.  More generally, Kato and Ponce~\cite{KP} showed $W_{\Delta}^{\alpha,p}\cap L^{\infty}$ is an algebra assuming only $1<p<\infty$ and $\alpha>0$. In the homogeneous case $\dot{W}_{-\Delta}^{\alpha,p}\cap L^{\infty}$  was proved to be an algebra for the same range of $p$ and $\alpha$  by Gulisashvili and Kon~\cite{GK}.

Outside the Euclidean setting there are positive results due to Coulhon, Russ and Tardivel-Nachef~\cite{CRT} on Lie groups with polynomial volume growth and on Riemannian manifolds with positive injectivity radius and non-negative Ricci curvature.  Results under weaker geometric conditions were later obtained by Badr, Bernicot and Russ~\cite{BBR} and most recently by Bernicot, Coulhon and Frey~\cite{BCF}.  There are two main approaches: one is to characterize when $f$ is in the Sobolev space using functionals defined from suitable averaged differences of $f$ and the other is to take a paraproduct decomposition of the product and use square function estimates to reduce the problem to the Leibniz property of a gradient operator associated to $\op$.  Since our interest in this paper will be in negative results we will not attempt to describe the precise state of the art but instead isolate two theorems which give positive results of a similar type.  It should be emphasized that these results were chosen for the simplicity of their statements, and are far from the most general statements proved in~\cite{CRT,BCF}.

\begin{theorem}[\protect{\cite{CRT} Theorem~2}]
Let $G$ be a connected Lie group of polynomial volume growth, equipped with Haar measure and a family $Y_{j}$ of left-invariant H\"{o}rmander vector fields.  Let $\op$ be the associated sub-Laplacian $-\sum_{j=1}^{k}Y_{j}^{2}$.  For $\alpha\geq0$ and $1<p<\infty$ the space $\dot{W}_{\op}^{\alpha,p}\cap L^{\infty}$ is an algebra under the pointwise product.
\end{theorem}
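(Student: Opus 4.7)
The plan is to reduce the algebra property to a pointwise Leibniz-type bound through an equivalent characterization of the Sobolev seminorm in terms of averaged differences, then iterate to reach all $\alpha \geq 0$. The foundation is the Varopoulos--Saloff-Coste--Coulhon theory on Lie groups of polynomial growth: the heat semigroup $e^{-t\mathcal{L}}$ enjoys two-sided Gaussian bounds with respect to the Carnot--Carath\'eodory distance $d$ associated to $\{Y_j\}$, and $(G,d,\mu)$ is volume doubling. These standard ingredients underpin everything that follows.

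For $0 < \alpha < 1$ the heart of the argument is the equivalence
\[
	\|\mathcal{L}^{\alpha/2} f\|_p \simeq \|S_\alpha f\|_p, \qquad S_\alpha f(x) := \Bigl( \int_0^\infty \Bigl( \fint_{B(x,r)} |f(y)-f(x)|^2 \, d\mu(y) \Bigr) r^{-2\alpha} \tfrac{dr}{r} \Bigr)^{1/2},
\]
valid for $1 < p < \infty$. The upper bound follows by subordinating $\mathcal{L}^{\alpha/2}$ to the heat semigroup and controlling kernel differences through the Gaussian bounds; the reverse uses a Littlewood--Paley decomposition built from the heat semigroup together with vector-valued Calder\'on--Zygmund theory, which is where the restriction $1<p<\infty$ enters. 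Given the equivalence, the algebra property for $\alpha \in (0,1)$ is immediate from the pointwise inequality $S_\alpha(fg)(x) \leq \|f\|_\infty S_\alpha g(x) + \|g\|_\infty S_\alpha f(x)$, itself a trivial consequence of $|f(x)g(x)-f(y)g(y)| \leq |f(x)||g(x)-g(y)| + |g(y)||f(x)-f(y)|$.

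For general $\alpha = k+s$ with $k \in \mathbb{N}$ and $s \in [0,1)$, a subelliptic Calder\'on--Zygmund estimate gives $\|\mathcal{L}^{k/2} g\|_p \simeq \sum_{|I|=k} \|Y^I g\|_p$, and the Leibniz rule for each $Y_j$ combined with Gagliardo--Nirenberg interpolation in this setting (using the $L^\infty$ hypothesis to absorb cross terms such as $\|Y^J f \cdot Y^{I-J}g\|_p$) handles the integer case. The fractional case is then covered by applying the difference-quotient equivalence to the functions $Y^I f$ and $Y^I g$ and iterating the Leibniz estimate, with interpolation again used to redistribute derivatives between the two factors.

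The main obstacle is the fractional norm equivalence $\|\mathcal{L}^{\alpha/2} f\|_p \simeq \|S_\alpha f\|_p$: it rests on delicate spectral-multiplier and vector-valued maximal-function estimates adapted to the Gaussian heat kernel on a doubling space, and further care is required once one works with compositions of non-commuting $Y_j$. Once that analytic machinery is in place, the Leibniz step is essentially elementary.
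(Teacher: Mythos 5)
This statement is not proved in the paper under review: it is quoted as background, being Theorem~2 of \cite{CRT}, so your proposal has to be measured against the argument in that reference. In outline you follow the same route as \cite{CRT} --- an averaged-difference characterization of $\|\op^{\alpha/2}f\|_{p}$ for $0<\alpha<1$ combined with a pointwise Leibniz inequality, then a reduction of general $\alpha$ to integer plus fractional parts --- but your key lemma is formulated with the wrong functional, and as stated it is false on part of the range $1<p<\infty$ that the theorem covers. You place the inner average in $L^{2}$,
\begin{equation*}
S_\alpha f(x)=\biggl(\int_0^\infty\Bigl(\frac{1}{\mu(B(x,r))}\int_{B(x,r)}|f(y)-f(x)|^{2}\,d\mu(y)\Bigr)\,r^{-2\alpha}\,\frac{dr}{r}\biggr)^{1/2},
\end{equation*}
and claim $\|S_\alpha f\|_{p}\simeq\|\op^{\alpha/2}f\|_{p}$ for all $1<p<\infty$. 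Already on $G=\mathbb{R}^{n}$ with $n\geq2$ (an abelian Lie group of polynomial growth, hence covered by the theorem) this fails for $1<p<2n/(n+2\alpha)$: by Fubini your $S_\alpha f(x)$ is comparable to Stein's functional $\bigl(\int|f(y)-f(x)|^{2}|x-y|^{-n-2\alpha}\,dy\bigr)^{1/2}$, whose $L^{p}$ norm is equivalent to $\|(-\Delta)^{\alpha/2}f\|_{p}$ only when $p\geq 2n/(n+2\alpha)$ (Stein, \emph{Singular Integrals and Differentiability Properties of Functions}, Ch.~V, \S5, where the sharpness of this threshold is discussed). Concretely, for $1<p<2n/(n+2\alpha)$ pick $n/2\leq\beta<n/p-\alpha$ and $f(x)=|x|^{-\beta}\chi(x)$ with $\chi$ a smooth cutoff: then $f\in \dot{W}^{\alpha,p}$ but $f\notin L^{2}_{\mathrm{loc}}$, so the inner quadratic average is infinite for every ball containing the origin and $S_\alpha f\equiv+\infty$ on a neighborhood of $0$. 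The restriction to bounded functions in the theorem does not rescue the lemma: if $\|S_\alpha f\|_{p}\leq C\|\op^{\alpha/2}f\|_{p}$ held with a uniform constant for all $f\in C_c^\infty$, density of $C_c^\infty$ in $W^{\alpha,p}$ and Fatou's lemma would propagate it to the counterexample. Since your argument needs exactly this direction to control the two factors $f$ and $g$ after the Leibniz step, it collapses for small $p$.

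The repair is precisely the choice made by Strichartz \cite{Strich} and by \cite{CRT}: take the inner average in $L^{1}$ and square it afterwards, i.e.\ replace the inner integral by $\bigl(\mu(B(x,r))^{-1}\int_{B(x,r)}|f(y)-f(x)|\,d\mu(y)\bigr)^{2}$. By Jensen's inequality this functional is smaller; its $L^{p}$ norm \emph{is} equivalent to $\|\op^{\alpha/2}f\|_{p}$ for all $1<p<\infty$ and $0<\alpha<1$ under doubling and Gaussian heat kernel bounds, and your pointwise Leibniz inequality survives verbatim, since $|f(y)g(y)-f(x)g(x)|\leq\|f\|_\infty|g(y)-g(x)|+|g(x)|\,|f(y)-f(x)|$ may be averaged in $L^{1}$ before squaring. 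A second, lesser, caution concerns your integer-order step: the equivalence $\|\op^{k/2}g\|_{p}\simeq\sum_{|I|=k}\|Y^{I}g\|_{p}$ amounts to boundedness of higher-order Riesz transforms, which for $k\geq2$ does not hold on all Lie groups of polynomial growth (boundedness of all second-order Riesz transforms already forces structural restrictions on the group); this is why the induction in \cite{CRT} is arranged to use only first-order Riesz transforms (Alexopoulos's theorem) together with Gagliardo--Nirenberg inequalities in which the $L^{\infty}$ norms absorb the cross terms.
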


\begin{theorem}[\protect{\cite{BCF} Theorem~1.5}]\label{thm:BCF}
Let $(M,d,\mu,\DF)$ be a doubling metric measure space with Dirichlet form $\DF$ and associated operator $\op$.  Suppose the energy measures of functions in the domain of $\DF$ (in the sense of Beurling-Deny) are absolutely continuous with respect to $\mu$, and that for all $x\in M$ balls are measure doubling with $\mu(B(x,r_1))\leq (r_{1}/r_{2})^{\nu}B(x,r_{2})$ for $0<r_{2}\leq r_{1}$ and some $\nu>0$.  Further assume that the heat semigroup associated to $\op$ has a kernel $h_{t}$ satisfying $h_{t}(x,y)\leq \bigl( \mu(B(x,\sqrt{t})\mu(B(y,\sqrt{t})\bigr)^{-1/2}$ for a.e.~$x,y\in M$ and $t>0$.  Then for $p\in(1,2]$ and $0<\alpha<1$ or for $p\in(2,\infty)$ and $0<\alpha<1-\nu\bigl(\frac{1}{2}-\frac{1}{p}\bigr)$ we have that $\dot{W}_{\op}^{\alpha,p}\cap L^{\infty}$ is an algebra.  (See Figure~\ref{fig:BCF} for an illustration of the $(\alpha,p)$ region in which the corresponding Sobolev spaces have the algebra property.) 
\end{theorem}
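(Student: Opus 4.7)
The plan is to follow the paraproduct route that the introduction identifies as the more modern of the two strategies. First I would introduce a Littlewood--Paley decomposition $f=\sum_{j\in\mathbb{Z}}\phi_j(\sqrt{\op})f$ built from the functional calculus of $\op$, with $\phi_j$ a smooth bump concentrated at spectral scale $2^j$. The assumed Gaussian-type pointwise upper bound on $h_t$, combined with the doubling hypothesis, supports a vector-valued Calder\'on--Zygmund argument of Coifman--Weiss type and yields the square function identification
\[
\|f\|_{\dot{W}_{\op}^{\alpha,p}}\approx\Bigl\|\Bigl(\sum_{j}2^{2j\alpha}\bigl|\phi_j(\sqrt{\op})f\bigr|^2\Bigr)^{\!1/2}\Bigr\|_p,
\]
valid for every $1<p<\infty$ and $\alpha>0$. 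This is the first technical input and effectively reduces estimates on $\op^{\alpha/\degr}$ to estimates on a family of localized operators.

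Next I would split the pointwise product via a paraproduct
$fg=\pi_f(g)+\pi_g(f)+\Pi(f,g)$
into two low-high pieces whose summands have essentially separated frequency supports and a resonant diagonal piece. Using the $L^{\infty}$ hypothesis together with the square function identification, the two paraproduct pieces contribute the expected terms $\|f\|_{\infty}\|g\|_{\dot{W}_{\op}^{\alpha,p}}+\|g\|_{\infty}\|f\|_{\dot{W}_{\op}^{\alpha,p}}$ by standard spectral multiplier bounds and a Fefferman--Stein vector-valued maximal inequality. The resonant piece $\Pi(f,g)$ is the heart of the matter: here one invokes the Beurling--Deny energy, whose assumed absolute continuity produces a carr\'e du champ density $\Gamma$ that obeys the pointwise Leibniz estimate $\Gamma(fg)^{1/2}\le|f|\Gamma(g)^{1/2}+|g|\Gamma(f)^{1/2}$. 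Converting this energy-level Leibniz rule into a Sobolev-norm inequality at the integer level $\alpha=1$ requires a Riesz transform bound $\|\Gamma(f)^{1/2}\|_p\lesssim\|\op^{1/2}f\|_p$; the fractional range $\alpha\in(0,1)$ follows either by interpolating with the trivial $\alpha=0$ estimate or by using an averaged-gradient characterization of the fractional Sobolev norm.

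The main obstacle is the $L^p$ range in which the Riesz transform bound is available. With only the Gaussian-type upper heat kernel bound at hand, one obtains it in full strength for $p\in(1,2]$, which is precisely what furnishes the unrestricted range $\alpha\in(0,1)$ in that regime; for $p>2$ the Riesz transform may fail outright, and only a weakened inequality is available which loses up to $\nu\bigl(\tfrac12-\tfrac1p\bigr)$ derivatives (the deficit incurred in passing from an $L^2$ gradient estimate to an $L^p$ one through Sobolev embedding adapted to the homogeneous dimension $\nu$). This deficit is exactly what carves out the Bernicot--Coulhon--Frey cut-off $\alpha<1-\nu(\tfrac12-\tfrac1p)$ appearing in the theorem. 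Once these ingredients are in place, recombining the paraproduct contributions and summing the resulting geometric series in $j$ against the doubling volume bound $\mu(B(x,r_1))\leq(r_1/r_2)^{\nu}\mu(B(x,r_2))$ closes the argument.
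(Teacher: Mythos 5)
You should note at the outset that the paper does not prove Theorem~\ref{thm:BCF} at all: it is quoted from~\cite{BCF} as background motivation (the paper's own contribution is negative results in the opposite direction), so the only meaningful comparison is with the proof in~\cite{BCF}. Your sketch has the right overall architecture --- semigroup-adapted Littlewood--Paley theory, a paraproduct splitting, the carr\'{e} du champ, and $p$-dependent gradient bounds, and you correctly locate the origin of the $p>2$ cutoff --- but two load-bearing steps fail as written. The first is your treatment of the low-high paraproduct pieces by ``standard spectral multiplier bounds'' on the grounds that their summands have ``essentially separated frequency supports.'' In the abstract Dirichlet-form setting there is no frequency-support calculus: a product such as $\phi_k(\sqrt{\op})f\cdot\phi_j(\sqrt{\op})g$ is not spectrally localized, no multiplier theorem applies to it, and the Euclidean fact that a low-high product lives at the high frequency is simply unavailable. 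This is the central difficulty of the whole problem, and it is why~\cite{BCF} do not use frequency-cutoff paraproducts: they build paraproducts from the semigroup itself, e.g.\ by expanding $fg=-\int_0^\infty\partial_t\bigl[e^{-t\op}\bigl(e^{-t\op}f\cdot e^{-t\op}g\bigr)\bigr]\,dt$, so that every resulting term carries an outer factor $e^{-t\op}$ or $t\op e^{-t\op}$ on which $\op^{\alpha/2}$ can act through the functional calculus; no spectral localization of products is ever invoked. Without some such device, your estimate of $\|\op^{\alpha/2}\pi_f(g)\|_p$ has no starting point.

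The second gap is the resonant term. Your route is the Leibniz rule for $\Gamma$ plus the Riesz transform bound $\|\Gamma(h)^{1/2}\|_p\lesssim\|\op^{1/2}h\|_p$, then interpolation in $\alpha$. The forward Riesz bound is indeed available for $1<p\le 2$ under doubling and the upper heat kernel estimate (Coulhon--Duong), but to convert control of $\Gamma(fg)$ back into a bound on $\|\op^{1/2}(fg)\|_p$ you need the \emph{reverse} inequality $\|\op^{1/2}h\|_p\lesssim\|\Gamma(h)^{1/2}\|_p$, which under these hypotheses is available only for $p\ge 2$ (by duality from the forward bound at the conjugate exponent). The two ranges meet only at $p=2$, so the $\alpha=1$ endpoint estimate you propose to interpolate from is not established on the range of $p$ in the statement; moreover, interpolating in $\alpha$ a bilinear estimate whose right-hand side mixes $\|\cdot\|_\infty$ with Sobolev norms is not a standard application of interpolation and would itself need proof. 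What~\cite{BCF} actually do avoids both problems: the carr\'{e} du champ enters only through semigroup gradient estimates of the form $\|\sqrt{t}\,\Gamma(e^{-t\op}h)^{1/2}\|_p\lesssim\|h\|_p$ --- valid for $p\le 2$, and valid for $p>2$ only with a loss of order $t^{-\frac{\nu}{2}(\frac{1}{2}-\frac{1}{p})}$, which is precisely what produces the restriction $\alpha<1-\nu\bigl(\frac{1}{2}-\frac{1}{p}\bigr)$ --- and these are fed into vertical and conical square function estimates; the reverse Riesz inequality is never used. So your diagnosis of where the cutoff comes from is correct, but the machinery you propose does not assemble into a proof of the stated result.
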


\begin{figure}
\begin{tikzpicture}
\path[fill=gray] (1,0)--(4,2) --(4,4)--(0,4)--(0,0)--(1,0);
\draw[thick, <->] (0,4.5) -- (0,0) -- (5,0);
\draw (1,-.1)--(1,.1);
\draw (4,-.1)--(4,.1);
\draw (-.1,2)--(.1,2);
\draw (-.1,4)--(.1,4);
\draw[thick, dashed] (0,4) -- (4,4);
\draw[thick, dashed] (1,0) -- (4,2);
\draw[thick, dashed] (4,2) -- (4,4);
\node at (1,-0.5) {$1-\frac{\nu}{2}$};
\node at (4,-0.5) {$1$};
\node at (5.5,0) {$\alpha$};
\node at (-0.4,4) {$1$};
\node at (-0.4,2) {$\frac{1}{2}$};
\node at (-1,2.25) {$\displaystyle\frac{1}{p}$};
\end{tikzpicture}
\caption{For $\op$ satisfying the assumptions of \protect{Theorem~\ref{thm:BCF}} and $(\alpha,p)$ in the shaded region, $\dot{W}^{\alpha,p}\cap L^{\infty}$ is an algebra.}\label{fig:BCF}
\end{figure}
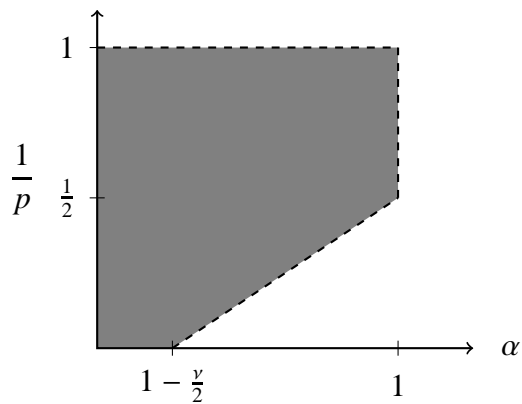

The  results of the present work indicate some of the obstacles that may be encountered in extending Theorem~\ref{thm:BCF} to larger $\alpha$.  It should be noted that several of the hypotheses of Theorem~\ref{thm:BCF} fail in the examples of Theorem~\ref{thm:mainresult}.  In our examples the energy measures of functions in the domain of $\DF$ are not absolutely continuous with respect to the measure $\mu$; the significance of this for failure of the algebra property was a feature of one of the basic arguments of~\cite{BST}.  At the same time, the upper estimate $h_{t}(x,y)\leq \bigl(\mu(B(x,\sqrt{t}))\mu(B(y,\sqrt{t})\bigr)^{-1/2}$ is also invalid on our examples, instead being replaced by $h_{t}(x,y)\leq C \bigl(\mu(B(x,t^{1/\beta}))\mu(B(y,t^{1/\beta})\bigr)^{-1/2}$ for $0<t<1$ and constants $C>0$ and $\beta>2$.  The exponent $\beta$ here is the so-called walk dimension of the diffusion with generator $\op$.  In our examples $\beta=D+1$, where $D>1$ is the Hausdorff dimension of the space $X$.  Our first result illustrates the fact that the algebra property can fail for a wide range of indices.

\begin{theorem}\label{thm:mainresult}
Given $\alpha\in(1,2)$ and $p\in(1,\infty]$ satisfying $\alpha p>2$ there is a compact metric space $X$ with Ahlfors regular measure $\mu$ and a Laplacian operator $\op$ which is densely-defined on $L^{2}(\mu)$, non-positive definite, self-adjoint, Markovian,  strongly local, and such that neither $W^{\alpha,p}_\op$ nor $\dot{W}^{\alpha,p}_\op\cap L^{\infty}$ is an algebra. Figure~\ref{fig:mainresult} illustrates the corresponding region of $\alpha$ and $p$ values.
\end{theorem}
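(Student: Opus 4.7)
The plan is to work on a family of self-similar fractals---the prototypes being the Sierpinski gasket and its nested or $n$-branching relatives---each equipped with its standard Ahlfors regular probability measure $\mu$, strongly local Dirichlet form $\DF$, and the associated non-negative definite self-adjoint Laplacian $\op$. Varying within such a family provides enough flexibility in the Hausdorff dimension $D$ (and hence the walk dimension $\beta = D+1$) to cover every admissible pair $(\alpha,p)$ in the theorem. The structural ingredient driving the counterexample is that for $f$ in the domain of $\DF$ the Beurling--Deny energy measure $d\Gamma(f,f)$ is singular with respect to $\mu$; formally,
\[
  \op(f^{2}) = 2f\,\op f - 2\,\frac{d\Gamma(f,f)}{d\mu},
\]
and the second term on the right is measure-theoretically singular, which is exactly the mechanism exploited in~\cite{BST}.

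Next I would pass to a workable description of $\op^{\alpha/\degr}$ via the heat semigroup calculus. Under the sub-Gaussian bound $h_{t}(x,y)\leq C\bigl(\mu(B(x,t^{1/\beta}))\mu(B(y,t^{1/\beta}))\bigr)^{-1/2}$ recalled in the introduction, the Bessel potential norm $\|\op^{\alpha/\degr}f\|_{p}$ admits a Littlewood--Paley-type representation that can be evaluated on explicit test functions built from the cell structure of $X$. The threshold $\alpha p>\degr$ in the statement reflects the mismatch between the exponent $\alpha/\degr$ built into~\eqref{eqn:defnWsp} (which is calibrated to the Euclidean walk dimension $\degr$) and the fractal walk dimension $\beta>\degr$; above this threshold the singular part of $\op(f^{2})$ is resolved finely enough by $\op^{-\alpha/\degr}$ to prevent it from lying in $L^{p}(\mu)$.

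For the test function I would take either a single piecewise harmonic spline, or, if scale-invariance is needed, a lacunary sum $f=\sum_{n}c_{n}\psi_{n}$ with each $\psi_{n}$ a rescaled harmonic bump supported on an $n$-level cell of $X$. Self-similarity propagates estimates across scales, reducing the problem to a pair of scaling inequalities for a single building block; the coefficients $c_{n}$ are then tuned so that the series controlling $\|f\|_{W^{\alpha,p}_{\op}}$ converges while the one bounding $\|\op^{\alpha/\degr}(f^{2})\|_{p}$ from below diverges. The same $f$, normalised to be uniformly bounded via harmonic maximum-principle considerations, handles both the inhomogeneous and the homogeneous statements.

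The main obstacle will be the sharp lower bound on $\|\op^{\alpha/\degr}(f^{2})\|_{p}$, which must genuinely detect the singular component of $d\Gamma(f,f)$ rather than be absorbed into the tame term $2f\,\op f$. Concretely, one must show that this singular measure, supported on the $\mu$-null set of junction points of the fractal, is not smoothed by $\op^{-\alpha/\degr}$ enough to land in $L^{p}(\mu)$. This is the quantitative analog, in the Bessel potential scale and for $(\alpha,p)$ away from the corner case $\alpha=\degr$, $p=\infty$ treated in~\cite{BST}, of the obstruction exploited there, and verifying it against the sub-Gaussian heat kernel estimates for $\op$ on each fractal in the family is where the bulk of the analytic work lies.
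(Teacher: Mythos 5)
Your choice of setting matches the paper's --- it too works with a family of fractals (the Vicsek-type sets \VLN) whose dimension $D$ can be pushed to infinity, which is exactly how the hypothesis $s(D+1)-\frac{D}{p}>2$ gets reduced to $sp>1$, i.e.\ $\alpha p>2$ --- but the core mechanism of your argument has a genuine gap. Everything rests on producing an explicit $f$ and proving the lower bound $\|\op^{\alpha/2}(f^{2})\|_{p}=\infty$ by ``detecting'' the singular part of the energy measure through the fractional power. You flag this as where the bulk of the work lies, but no method is offered, and there is a reason none is easy to supply: the identity $\op(f^{2})=2f\op f-2\,d\Gamma(f,f)/d\mu$ is precisely what fails in this setting (the Radon--Nikodym derivative does not exist, since $\Gamma(f,f)$ is singular --- that is the content of the obstruction, not a formula one can compute with), and for $s=\alpha/2<1$ there is no clean relation between $\op^{s}(f^{2})$ and $\Gamma(f,f)$: one would have to show that the smoothing operator $\op^{-(1-s)}$ applied to this singular measure fails to land in $L^{p}$ while the companion term $2f\op f$ stays under control, and a Littlewood--Paley representation of the norm does not by itself yield such a lower bound. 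There is also a factual error in your picture: energy measures here are non-atomic and are \emph{not} supported on the countable set of junction points; on \VLN\ they live on $\mu$-null line segments, and on \SG\ they have full support while being singular, so localizing the singularity at junction points is not a usable description.

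The paper avoids this difficulty entirely: it never takes a fractional power of $f^{2}$ and never uses energy measures in the proof (their singularity is only background motivation). Instead it argues through normal derivatives. The quantitative backbone, Theorems~\ref{thm:holderembed} and~\ref{thm:normalderiv} (proved from resolvent kernel estimates derived from the heat kernel bounds), shows that $u=\op^{-s}f\in\Wsp$ forces the discrete differences at a vertex $q$ to satisfy $|\op_{m}u(q)|\lesssim r^{m(s(D+1)-D/p)}$, while $\delta_{m}u(q)=r^{m}\,du(q)+O(r^{m(s(D+1)-D/p)})$. If both $u$ and $u^{2}$ lie in \Wsp, then with $v=u-u(q)$ the elementary inequality $(\delta_{m}u(q))^{2}\lesssim|\op_{m}v^{2}(q)|$ (Theorem~\ref{thm:nosquares}) forces $du(q)=0$ at every vertex whenever $s(D+1)-\frac{D}{p}>2$; Proposition~\ref{prop:DFasdeltam} then shows any such $u$ is constant, and since \Wsp\ contains non-constant smooth bump functions by~\cite{RST}, the algebra property fails --- no explicit test function, no lower bound for a fractional power of a product, no tuning of lacunary coefficients. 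If you wish to salvage your route, the missing lemma you would need is a substitute for this normal-derivative obstruction, i.e.\ a concrete, quantitatively verifiable property that membership in \Wsp\ forbids for squares; as written, your proposal assumes the hard step rather than proving it.
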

\begin{proof}
This follows directly from Theorem~\ref{thm:VNLexamplesDlimit} and Corollary~\ref{cor:bddness} below.
\end{proof}

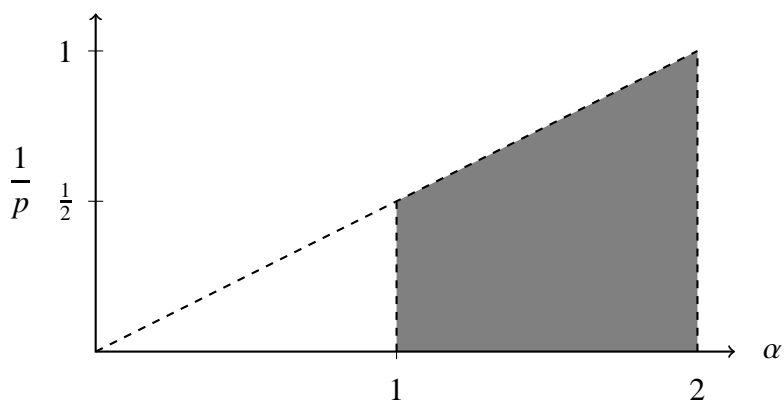
\begin{figure}
\begin{tikzpicture}
\path[fill=gray] (4,0)--(4,2) --(8,4)--(8,0)--(4,0);
\draw[thick, <->] (0,4.5) -- (0,0) -- (8.5,0);
\draw (4,-.1)--(4,.1);
\draw (-.1,4)--(.1,4);
\draw (-.1,2)--(.1,2);
\draw[thick, dashed] (0,0) -- (8,4);
\draw[thick, dashed] (4,0) -- (4,2);
\draw[thick, dashed] (8,0) -- (8,4);
\node at (4,-0.5) {$1$};
\node at (9,0) {$\alpha$};
\node at (-0.4,4) {$1$};
\node at (8,-0.5) {$2$};
\node at (-1,2.25) {$\displaystyle\frac{1}{p}$};
\node at (-0.4,2) {$\frac{1}{2}$};
\end{tikzpicture}
\caption{The $(\alpha,p)$ values for which  \protect{Theorem~\ref{thm:mainresult}} proves $W^{\alpha,p}$ and  $\dot{W}^{\alpha,p}\cap L^{\infty}$ can fail to be an algebras.}\label{fig:mainresult}
\end{figure}

The paper is organized as follows.  Section~\ref{sec:Fractalsandresolvent} gives some standard background and assumptions for our class of fractal examples.  In Section~\ref{Section:localbehavior} we use heat kernel estimates and additional features of the fractal structure to analyze the local behavior of Sobolev functions. The failure of the algebra property is discussed in Section~\ref{Section:algebraprop}; as a consequence of our discussion we also note that the Sobolev space fails to be preserved by the action of a function that is differentiable and has a certain lower convexity bound, see Corollary~\ref{cor:noconvexfns}.  Finally, in Section~\ref{Section:Examples}, we present some specific collections of fractal spaces on which our assumptions hold and complete the arguments that prove Theorem~\ref{thm:mainresult}.

\section{Laplacians on a class of fractals and estimates for the resolvent}\label{sec:Fractalsandresolvent}
We consider a post-critically finite fixed set $X\subset \mathbb{R}^{N}$ of an iterated function system $\{F_{j}\}_{j=1,\dotsc J}$.  For detailed definitions of such sets and their properties, including the definition and all results on resistance forms stated without proof below, see~\cite{Kigamibook}.  We write $V_{0}$ for the finite post-critical set, which we consider to be the boundary of $X$.   For a word $w=w_{1}\dotsm w_{m}$ of length $|w|=m$ with letters from $\{1,\dotsc,J\}$ let $F_{w}=F_{w_{1}}\circ\dotsm\circ F_{w_{m}}$.  We refer to $F_{w}(X)$ with $|w|=m$ as an $m$-cell of $X$.  Define $V_{m}=\cup_{|w|=m}F_{w}(V_{0})$ and consider these to be the vertices of a graph in which the edge relation $x\sim_{m}y$ means that $x,y\in V_{m}$ and there is $w$ with $|w|=m$ and both $x,y\in F_{w}(V_{0})$.  On this $m$-scale graph there is a resistance form $\DF_{m}=\sum_{x\sim_{m}y}(u(x)-u(y))^{2}$. 

We assume there is a resistance renormalization constant $0<r<1$ such that $\lim_{m\to\infty} r^{-m}\DF_{m}(u)$ is non-decreasing with limit $\DF(u)$, and this defines a regular resistance form on $V_{\ast}=\cup_{m}V_{m}$ with domain the set $\dom(\DF)=\{u:\DF(u)<\infty\}$. Note that the functions with $\DF(u)=0$ are constants. The resistance form is self-similar in that $\DF(u\circ F_{w}^{-1})=r^{|w|}\DF(u)$ for any $u$ on $F_{w}(V_{\ast})$ so that $u\circ F_{w}^{-1}\in\dom(\DF)$, and it defines the resistance metric $R(x,y)=\sup\{\DF(u)^{-1}:u(x)=0,u(y)=1\}$.  Functions in $\dom(\DF)$ are then $\frac{1}{2}$-H\"{o}lder with respect to $R(x,y)$ because $|u(x)-u(y)|^{2}\leq \DF(u)R(x,y)$, so they extend from $V_{\ast}$ to its $R$-completion, which is $X$. Given a function on a finite subset $Y\subset X$ there is an element of $\dom(\DF)$ which extends the function on $Y$ to $X$; among such extensions there is a unique minimizer of $\DF$, and such minimizers are said to be harmonic on $X\setminus Y$.  If $Y=V_{0}$ the minimizers are simply called harmonic functions, and if $Y=V_{m}$ they are called piecewise harmonic of scale $m$.  For convenience we scale $\DF$ so that the $R$-diameter of the space is $1$.

In addition to its resistance structure we equip $X$ with the unique self-similar measure in which all $m$-cells have equal mass $\mu^{m}$ for some constant $\mu\in(0,1)$. Then $\mu=r^{D}$ for $D$ the Minkowski dimension of $X$ in the resistance metric;  in all cases we consider $D$ is also the Hausdorff dimension by a well-known result of Hutchinson~\cite{H}.  Abusing notation we also use $\mu$ to denote the measure.  From a theorem of Kigami (see Chapter 3 of~\cite{Kigamibook}) $\DF$ is a Dirichlet form on $L^{2}(\mu)$, whence by standard results (e.g., from Chapter~1 of~\cite{FOT}) we may define a non-negative definite self-adjoint (Dirichlet) Laplacian by setting
\begin{equation}\label{eqn:defnofDelta}
	\DF(u,v) = \int (\op u) v\, d\mu \text{ for all $v\in\dom_{0}(\DF)$}
	\end{equation}
where $\dom_{0}(\DF)$ denotes the functions in the domain of $\DF$ that vanish on $V_{0}$. (Note that it is more usual in the fractal literature to define a non-positive definite Laplacian $\Delta$; for us $\op=-\Delta$.) This Laplacian has compact resolvent and therefore its spectrum consists of non-negative eigenvalues accumulating only at $\infty$.  Moreover the eigenvalue of least magnitude is $\lambda_{1}>0$.
One may also define a Neumann Laplacian by instead requiring that~\eqref{eqn:defnofDelta} holds for all $v\in\dom(\DF)$; our results are unchanged if the Neumann Laplacian is used in place of the Dirichlet Laplacian. We define a normal derivative $du(q)$ at $q\in V_{0}$ by $du(q)=\lim_{m\to\infty}r^{-m}\sum_{x\sim_{m}q}(u(q)-u(x))$. This exists for all sufficiently regular $u$ (for precise conditions see~\cite{KigamiJFA03}), and in particular once $\op u$ exists as a measure (in the sense that $v\mapsto\DF(u,v)$ is a bounded linear functional on $\dom_{0}(\DF)$ with respect to the uniform norm).  Then $\DF(u,v)=\int(\op u)v\,d\mu+\sum_{q\in V_{0}}(du(q))v(q)$ for all $v\in\dom(\DF)$.

The normal derivative may be localized to a boundary vertex $F_{w}(p)$ of the cell $F_{w}(X)$ by setting
\begin{equation}\label{eqn:d}
	du(F_{w}(p))=\lim_{m\to\infty}r^{-m} \sum_{\substack{x\sim_{m}F_{w}(p)\\x\in F_{w}(X)} } \bigl(u(F_{w}(p)) - u(x) \bigr).
	\end{equation}
in which case we obtain a local Gauss-Green formula
\begin{equation}\label{eqn:locGG}
	\int_{F_{w}(X)} (\op u) v-  u(\op v) \, d\mu
	= \sum_{p\in F_{w}(V_{0})} u(p)(dv(p))- (du(p))v(p)
	\end{equation}

We make a strong assumption on the resistance metric and the heat semigroup associated to our Dirichlet form.  Specifically we assume there is $0<\gamma<D+1$ such that $R(x,y)^{\gamma}$ is comparable to a metric on $X$, and a heat kernel $h_{t}(x,y)$ for $e^{t\op}$ satisfying
\begin{equation}\label{eqn:HKests}
	h_{t}(x,y)
	\leq C_{H} t^{-D/(D+1)} \exp \Bigl( - c_{H}\Bigl(\frac{R(x,y)^{(D+1)}}{t} \Bigr)^{\gamma/(D+1-\gamma)} \Bigr)\quad\text{for $0<t<1$.}
	\end{equation}
For $t\geq1$ we may of course use the estimate from the spectral gap: $h_{t}(x,y)\leq e^{-\lambda_{1}t}$.  Although these assumptions seem very restrictive, they are known to be true for a large class of fractals that includes the examples in Section~\ref{Section:Examples}.   In particular both~\eqref{eqn:HKests} and a lower bound of the same type were proved for affine nested fractals in~\cite{FHK}.  Henceforth for notational convenience we write $A\lesssim B$ if  $A/B$ is bounded by constant depending only on the fractal and its Dirichlet form.  Implicitly, then, $A\lesssim B$ involves a constant that may depend on $r,\mu,\gamma, \lambda_{1}$ and the above constants $C_{H}$ and $c_{H}$.

The heat kernel bounds will be used to obtain regularity estimates for the various kernels we use to analyze the local properties of Sobolev functions. In practice we will work primarily with the kernel $G_{\lambda}(x,y)$ of the resolvent $(\lambda+\op)^{-1}$ with $\lambda>0$, which may be obtained via $G_{\lambda}=\int_{0}^{\infty} e^{-\lambda t}h_{t}\, dt$, and with the Riesz kernel $K_{s}(x,y)$ of $\op^{-s}$ for $s\in(0,1)$, which may be obtained as $C_{s}\int_{0}^{\infty} t^{s-1} h_{t}\,dt$ or as $C'_{s}\int_{0}^{\infty} \lambda^{-s} G_{\lambda}(x,y)\,d\lambda$ for suitable constants $C_{s},C'_{s}$ which will henceforth be suppressed.  Inevitably we will frequently need bounds of the following type
\begin{gather}
	\int_{0}^{A} \tau^{a} \exp(-\kappa \tau^{b}) \, \frac{d\tau}{\tau}
	=\begin{cases}
	\kappa^{-a/b} \int_{0}^{\kappa A^{b}}  u^{a/b}  e^{-u} \frac{du}{bu} \leq C_{a,b}\kappa^{-a/b} &\text{ if $a>0$, $b>0$}\\
	\kappa^{-a/b} \int_{\kappa A^{b}}^{\infty}  u^{a/b}  e^{-u} \frac{du}{|b|u} \leq C_{a,b} A^{a} \exp(-\kappa A^{b}) &\text{ if $b<0$}
	\end{cases}\label{eqn:integralest0A}\\
	\int_{A}^{\infty} \tau^{a} \exp(-\kappa \tau^{b}) \, \frac{d\tau}{\tau}
	=\begin{cases}
	\kappa^{-a/b} \int_{\kappa A^{b}}^{\infty}  u^{a/b}  e^{-u} \frac{du}{bu} \leq  C_{a,b} A^{a} \exp(-\kappa A^{b}) &\text{ if $b>0$}\\
	\kappa^{-a/b} \int_{0}^{\kappa A^{b}}  u^{a/b}  e^{-u} \frac{du}{|b|u} \leq  C_{a,b}\kappa^{-a/b} &\text{ if $a<0$, $b<0$}
	\end{cases}\label{eqn:integralestAinfty}
\end{gather}

For example we have
\begin{proposition}\label{prop:Rests} There is a constant $c>0$ so that for $\lambda\geq 0$,
\begin{equation}\label{eqn:Rests}
	\bigl| G_{\lambda}(x,y) \bigr|
	\lesssim 
	(1+\lambda)^{-1/(D+1)} \exp \Bigl( - c  R(x,y)^{\gamma} \lambda^{\gamma/(D+1)} \Bigr).
\end{equation}
\end{proposition}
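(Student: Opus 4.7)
The plan is to start from the representation $G_\lambda(x,y)=\int_0^\infty e^{-\lambda t}h_t(x,y)\,dt$, split the integral at $t=1$, and apply the sub-Gaussian estimate \eqref{eqn:HKests} on $[0,1]$ and the spectral-gap estimate $h_t\le e^{-\lambda_1 t}$ on $[1,\infty)$. Writing $R=R(x,y)$ and $\sigma=\gamma/(D+1-\gamma)$, the algebraic identities $\sigma/(\sigma+1)=\gamma/(D+1)$ and $(D+1)\sigma/(\sigma+1)=\gamma$ are exactly the reason the target exponent has the form $R^\gamma\lambda^{\gamma/(D+1)}$; these will be used repeatedly.

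For the $[0,1]$ piece, substitute \eqref{eqn:HKests} to bound the integrand by $C_H t^{-D/(D+1)}\exp\bigl(-\lambda t-c_H R^{(D+1)\sigma}t^{-\sigma}\bigr)$. The key step is a weighted AM--GM (equivalently, minimization in $t$): for any $a,b\ge 0$ and $t>0$,
\[
 a t + b t^{-\sigma}\ \ge\ C_\sigma\, a^{\sigma/(\sigma+1)} b^{1/(\sigma+1)},
\]
with equality at $t_*=(b\sigma/a)^{1/(\sigma+1)}$. Taking $a=\lambda$, $b=c_H R^{(D+1)\sigma}$ produces the lower bound $c'\lambda^{\gamma/(D+1)}R^\gamma$ on the combined exponent. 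Pulling half of this out of the exponential leaves the factor $\exp\bigl(-\tfrac{c'}{2}\lambda^{\gamma/(D+1)}R^\gamma\bigr)$ and reduces the remaining integral to $\int_0^1 t^{-D/(D+1)}e^{-\lambda t/2}\,dt$. This is controlled by \eqref{eqn:integralest0A} with $a=1/(D+1)$, $b=1$: for $\lambda\ge 1$ extend the integral to $[0,\infty)$ to get $O(\lambda^{-1/(D+1)})$, and for $\lambda\le 1$ use the trivial bound $\int_0^1 t^{-D/(D+1)}dt=D+1$. Both cases collapse to $O((1+\lambda)^{-1/(D+1)})$, giving the desired bound for the first piece.

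For the $[1,\infty)$ piece, a direct computation yields $\int_1^\infty e^{-(\lambda+\lambda_1)t}\,dt\le (\lambda+\lambda_1)^{-1}e^{-\lambda_1}\lesssim(1+\lambda)^{-1}\le(1+\lambda)^{-1/(D+1)}$, which has the correct prefactor. To absorb the exponential factor into this estimate, note that $R\le\mathrm{diam}_R(X)=1$ and $\gamma/(D+1)<1$ by assumption, so for $\lambda\ge 1$ one has $R^\gamma\lambda^{\gamma/(D+1)}\le\lambda$, while for $\lambda\le 1$ the factor $\exp(-cR^\gamma\lambda^{\gamma/(D+1)})$ is bounded below by a constant. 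Peeling a factor $e^{-\lambda_1/2}$ (or $e^{-\lambda/2}$ when $\lambda\ge 1$) out of the integrand produces the needed $\exp(-cR^\gamma\lambda^{\gamma/(D+1)})$ up to multiplicative constants. Adding the two pieces yields \eqref{eqn:Rests}. The only genuinely nontrivial step is the exponent bookkeeping around the AM--GM bound; the rest is standard use of \eqref{eqn:integralest0A}.
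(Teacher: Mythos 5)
Your proof is correct, and it takes a genuinely different route from the paper's. The paper splits $G_\lambda=\int_0^\infty e^{-\lambda t}h_t\,dt$ at the saddle point $A=R(x,y)^\gamma(1+\lambda)^{-(D+1-\gamma)/(D+1)}$: on $[0,A]$ it discards $e^{-\lambda t}$ and keeps only the sub-Gaussian factor from \eqref{eqn:HKests}, on $[A,\infty)$ it discards the sub-Gaussian factor and keeps $e^{-(1+\lambda)t}$ (with the spectral gap supplying decay for $t\geq1$), then applies the tabulated bounds \eqref{eqn:integralest0A} and \eqref{eqn:integralestAinfty} to each piece and absorbs the leftover power $\bigl((1+\lambda)A\bigr)^{1/(D+1)}$ into the exponential via $a^{1/(D+1)}e^{-c_H a}\lesssim e^{-c'a}$. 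You instead split at the fixed time $t=1$, keep \emph{both} competing terms in the exponent on $[0,1]$, and extract the decay pointwise by minimizing $\lambda t+c_H R^{(D+1)\sigma}t^{-\sigma}$ over $t$ (your weighted AM--GM with $a=\lambda$, $b=c_HR^{(D+1)\sigma}$), spending half of the resulting lower bound on the exponential factor and the other half on integrability; your $[1,\infty)$ piece is an explicit computation plus the observation that the target exponential costs nothing there, since $R^\gamma\lambda^{\gamma/(D+1)}\leq\lambda$ for $\lambda\geq1$ (using that the $R$-diameter of $X$ is $1$ and $\gamma<D+1$) and is comparable to a constant for $\lambda\leq1$. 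The two arguments encode the same optimization --- your minimizer $t_*=(b\sigma/a)^{1/(\sigma+1)}$ is, up to constants, exactly the paper's split point $A$ --- but the packaging differs, and each has its advantages: yours is more self-contained, avoiding both the choice of $A$ and the absorption trick, and needs only elementary integrals; the paper's sets up the saddle-point-split template with \eqref{eqn:integralest0A}/\eqref{eqn:integralestAinfty} that is reused essentially verbatim in Proposition~\ref{prop:Kest}, Proposition~\ref{prop:dRest}, Corollary~\ref{cor:dKdefined} and Corollary~\ref{cor:DdmKsests}, and it delivers the marginally stronger exponent $R^\gamma(1+\lambda)^{\gamma/(D+1)}$, which is equivalent to yours up to constants because $R\leq1$.
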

\begin{proof}
Write the resolvent kernel as $G_{\lambda}(x,y) =\int e^{-\lambda t}h_{t}(x,y)\, dt$ and split the integral over $[0,A)$ and $[A,\infty)$ for $A=  R(x,y)^{\gamma}(1+\lambda)^{-(D+1-\gamma)/(D+1)}\leq1$. On $[0,A]$ bound $e^{-\lambda t}h_{t}$ by the heat kernel estimate~\eqref{eqn:HKests} obtaining from~\eqref{eqn:integralest0A} that
\begin{align*}
	 \int_{0}^{A} t^{1/(D+1)} \exp \Bigl( - c_{H}\Bigl(\frac{R(x,y)^{(D+1)}}{t} \Bigr)^{\gamma/(D+1-\gamma)} \Bigr)\, \frac{dt}{t}
	&\lesssim A^{1/(D+1)} \exp \Bigl( - c_{H} \Bigl( \frac{R(x,y)^{(D+1)}}{A}\Bigr)^{\frac{\gamma}{(D+1-\gamma)} } \Bigr) \\
	&\lesssim(1+\lambda)^{-1/(D+1)} ((1+\lambda)A)^{1/(D+1)} \exp \bigl( - c_{H} (1+\lambda) A \bigr)\\
	&\lesssim  (1+\lambda)^{-1/(D+1)} \exp \Bigl( - c' R(x,y)^{\gamma} (1+\lambda)^{\gamma/(D+1)} \Bigr) \label{eqn:step1inRest}
	\end{align*}
where in the last step we used that $a^{1/(D+1)}e^{-c_{H}a}$ is bounded by a multiple of $e^{-c'a}$ for some choice of $c'<c_{H}$.

On $[A,1)$ we can bound the integrand  by $t^{-D/(D+1)}e^{-\lambda t}\leq et^{-D/(D+1)}e^{-(1+\lambda) t}$, with the power of $t$ coming from~\eqref{eqn:HKests}. A similar bound holds on $[1,\infty)$ because the spectral gap implies $h_{t}(x,y)\lesssim e^{-\lambda_{1}t}\leq C_{\lambda_{1},D}t^{-D(D+1)}e^{-|\lambda_{1}|/2t}$ and $e^{-(\lambda+|\lambda_{1}|/2)t}\leq e^{-c'' (1+\lambda)t}$ for some $c''=c''(\lambda_{1})\leq1$.  Thus using~\eqref{eqn:integralestAinfty} the contribution to the resolvent does not exceed
\begin{align*}
	C (1+\lambda)^{-1/(D+1)} \int_{(1+\lambda) A}^{\infty} u^{1/(D+1)}e^{-u} \frac{du}{u}
	&\lesssim (1+\lambda)^{-1/(D+1)}\bigl((1+\lambda) A\bigr)^{1/(D+1)} e^{-c''(1+\lambda) A} \\
	&\lesssim (1+\lambda)^{-1/(D+1)} \exp \Bigl( - c''' R(x,y)^{\gamma} (1+\lambda)^{\gamma/(D+1)} \Bigr)
	\end{align*}
for a suitable $c$ depending on $c'(\lambda_{1})$ and $D$, where we again bounded $ a^{1/(D+1)}e^{-c''A}$ by $e^{-c'''a}$.  Choosing $c$ to be the lesser of $c'$ and $c'''$ gives the result.
\end{proof}

Similarly we may bound the kernel $K_s(x,y)$ of $\op^{-s}$.
\begin{proposition}\label{prop:Kest} If $s<1$ then
\begin{equation*}
	|K_{s}(x,y)|\lesssim C(s) R(x,y)^{s(D+1)-D}
	\end{equation*}
\end{proposition}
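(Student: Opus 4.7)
The approach will mirror the argument for Proposition~\ref{prop:Rests}, exploiting the representation $K_s(x,y) = \int_0^\infty t^{s-1} h_t(x,y)\,dt$ and splitting the integration at the two natural time scales of the problem: the crossover $t = A := R(x,y)^{D+1}$ between Gaussian and polynomial behavior of $h_t$ in~\eqref{eqn:HKests}, and the onset of exponential decay at $t=1$ coming from the spectral gap. Note that $A\leq 1$ since the $R$-diameter of $X$ is $1$.

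On $(0,A]$, I plug in the heat kernel bound from~\eqref{eqn:HKests} and recognize the result as an instance of~\eqref{eqn:integralest0A}. Setting $a = s - D/(D+1)$, $b = -\gamma/(D+1-\gamma)<0$, and $\kappa = c_H R(x,y)^{(D+1)\gamma/(D+1-\gamma)}$, the integral becomes $\int_0^A t^{a}\exp(-\kappa t^{b})\,dt/t$, and~\eqref{eqn:integralest0A} bounds it by $C_{a,b} A^{a}\exp(-\kappa A^{b})$. The key simplification is that $\kappa A^{b}$ collapses to the absolute constant $c_H$, so this piece contributes $\lesssim A^{a} = R(x,y)^{s(D+1)-D}$.

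On $[A,1]$ the exponential factor in~\eqref{eqn:HKests} is harmless and the pure polynomial bound $h_t\lesssim t^{-D/(D+1)}$ suffices; the integrand reduces to $t^{a-1}$, and direct integration gives $|a|^{-1}|1 - A^{a}|$. When $a<0$ this is at most $|a|^{-1}A^{a} = |a|^{-1} R(x,y)^{s(D+1)-D}$, whereas when $a>0$ it is bounded by $1/a$, which is absorbed into the constant $C(s)$. Finally, on $[1,\infty)$ the spectral gap gives $h_t\lesssim e^{-\lambda_1 t}$, making $\int_1^\infty t^{s-1} e^{-\lambda_1 t}\,dt$ a finite constant depending only on $s$ and $\lambda_1$ (effectively a value of the Gamma function), again absorbed into $C(s)$.

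The main obstacle is the critical exponent $s = D/(D+1)$ where $a=0$: the middle integral degenerates to $\int_A^1 t^{-1}\,dt = (D+1)\log(1/R(x,y))$, producing a logarithm rather than a pure power. Accounting for this requires either a case analysis around $a=0$ or allowing $C(s)$ to blow up as $s\to D/(D+1)$; I expect the authors to handle it by letting $C(s)$ contain a factor $|a|^{-1} = |s - D/(D+1)|^{-1}$, in which case the three contributions combine to give the stated estimate.
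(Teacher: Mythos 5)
Your route is genuinely different from the paper's: you work directly with the heat-kernel representation $K_s=\int_0^\infty t^{s-1}h_t\,dt$ and split at $t=R(x,y)^{D+1}$ and $t=1$, whereas the paper writes $K_s=\int_0^\infty \lambda^{-s}G_\lambda\,d\lambda$ and feeds the resolvent bound of Proposition~\ref{prop:Rests} into a single application of~\eqref{eqn:integralest0A} (with $A=\infty$, $\kappa=R(x,y)^\gamma$, $b=\gamma/(D+1)$, $a=1-s-\frac{1}{D+1}$). Your small-time piece is handled exactly right: because $\kappa A^{b}$ collapses to the constant $c_H$, the $b<0$ case of~\eqref{eqn:integralest0A} gives precisely $A^{s-D/(D+1)}=R(x,y)^{s(D+1)-D}$. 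Moreover, in the regime $s(D+1)<D$ your middle and large-time pieces are also fine, since then the exponent $s(D+1)-D$ is negative, $R(x,y)\leq\diam X=1$ forces $R(x,y)^{s(D+1)-D}\geq1$, and all constant contributions are dominated by the claimed bound. In that regime your proof is a valid alternative of comparable length, with the small advantage of not needing Proposition~\ref{prop:Rests} as an intermediary.

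The gap is your treatment of the case $a=s-D/(D+1)>0$, i.e.\ $s(D+1)>D$. There the claimed right-hand side $C(s)R(x,y)^{s(D+1)-D}$ tends to zero as $R(x,y)\to0$, so the constant $1/a$ from $\int_A^1 t^{a-1}\,dt$ and the Gamma-type constant from $[1,\infty)$ cannot be ``absorbed into $C(s)$'': no fixed constant is bounded by a quantity that vanishes. This cannot be repaired by sharper estimates, because the statement itself fails in that regime: since $h_t\geq0$ and $h_t(x,x)>0$ away from $V_0$, one has $K_s(x,x)=\int_0^\infty t^{s-1}h_t(x,x)\,dt\in(0,\infty)$ whenever $s(D+1)>D$, and $K_s(x,\cdot)$ is then continuous, so the kernel is bounded below near the diagonal; the correct bound there is $|K_s(x,y)|\lesssim C(s)\max\bigl\{1,R(x,y)^{s(D+1)-D}\bigr\}$, which coincides with the stated one exactly when $s(D+1)\leq D$. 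You should know that the paper's own proof has the same blind spot: it invokes~\eqref{eqn:integralest0A} with $a=1-s-\frac{1}{D+1}$, which is positive precisely when $s(D+1)<D$, and the remark that only $1-s>0$ is needed guarantees convergence at $\lambda=0$ but not the bound $C_{a,b}\kappa^{-a/b}$, which fails for $a\leq0$ because the $\lambda\in(0,1)$ portion of the integral is bounded below by a positive constant independent of $R(x,y)$. Your observation about the logarithm at $s=D/(D+1)$ is likewise correct (there the bound holds only up to a factor $\log(1/R(x,y))$, consistent with $C(s)$ blowing up in either proof). In short: your argument, like the paper's, proves the proposition exactly where it is true, namely $s(D+1)<D$; for larger $s$ one should use the $\max\{1,\cdot\}$ form of the bound, which is still sufficient for Corollary~\ref{cor:bddness} since a bounded kernel on a finite measure space lies in every $L^{p/(p-1)}$.
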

\begin{proof}
One way to do this is to write the kernel $K_{s}(x,y)=\int_{0}^{\infty} \lambda^{-s} G_{\lambda}\, d\lambda$ and use the estimate from Proposition~\ref{prop:Rests}.  Apply~\eqref{eqn:integralest0A} with $A=\infty$, $\kappa=R(x,y)^{\gamma}$, $b=\gamma/(D+1)>0$ and $a=1-s-\frac{1}{D+1}$, where we only need to know $1-s>0$ because the rest of $a$ is from a factor $(1+\lambda)^{-1/(D+1)}$. 
\end{proof}

We will sometimes need $L^{p}(d\mu)$ estimates for kernels of this sort.
\begin{lemma}\label{lem:kerinLp}
If $K(x,y)$ is a kernel satisfying $|K(x,y)|\leq R(x,y)^{\eta}$ then $K(x,\cdot)\in L^{p}(X,\mu)$ for $\eta p>-D$.  If $B=B(x,R)$ is a ball of radius $R$ then 
\begin{equation*}
	\bigl\| K(x,\cdot) \bigr\|_{L^{p}(B,\mu)} \leq \frac{1}{p\eta+D} R^{\eta+D/p}.
	\end{equation*}
\end{lemma}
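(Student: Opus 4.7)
The plan is to reduce the claim to estimating $\int_{B(x,R)} R(x,y)^{\eta p}\,d\mu(y)$ and then exploit the $D$-dimensional volume growth of $\mu$. The latter comes directly from the self-similar construction in Section~\ref{sec:Fractalsandresolvent}: each $m$-cell has mass $\mu^{m}=r^{mD}$ and $R$-diameter comparable to $r^{m}$, so any resistance ball of radius $\rho\le 1$ is contained in a uniformly bounded number of $m$-cells with $r^{m}\asymp\rho$, giving the Ahlfors-type bound $\mu(B(x,\rho))\lesssim\rho^{D}$. Since $|K(x,y)|^{p}\le R(x,y)^{\eta p}$, it suffices to bound the above integral by a constant multiple of $R^{p\eta+D}/(p\eta+D)$ and then take a $p$-th root.

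If $\eta\ge 0$ the integrand is pointwise at most $R^{\eta p}$ on $B(x,R)$, so the integral is at most $R^{\eta p}\mu(B(x,R))\lesssim R^{p\eta+D}$ and $1/(p\eta+D)$ is uniformly bounded below in this range. The substantive case is $\eta<0$: I would set $a=-\eta p\in(0,D)$, where the restriction to this interval is precisely the hypothesis $\eta p>-D$. Starting from the layer-cake identity $R(x,y)^{-a}=a\int_{R(x,y)}^{\infty}t^{-a-1}\,dt$ and applying Fubini yields
\begin{equation*}
\int_{B(x,R)} R(x,y)^{-a}\,d\mu(y)= a\int_{0}^{\infty} t^{-a-1}\,\mu\bigl(B(x,R)\cap B(x,t)\bigr)\,dt.
\end{equation*}
Splitting the $t$-integral at $t=R$ and inserting $\mu(B(x,t))\lesssim t^{D}$ for $t\le R$, together with $\mu(B(x,R))\lesssim R^{D}$ for $t\ge R$, produces a bound of the form $\bigl(\frac{a}{D-a}+1\bigr)R^{D-a}=\frac{D}{D-a}R^{D-a}$. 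Since $D-a=p\eta+D$, this is $\lesssim(p\eta+D)^{-1}R^{p\eta+D}$, and a $p$-th root delivers the stated $L^{p}$ bound with room to spare in the constant (the genuine dependence is $(p\eta+D)^{-1/p}$, which is smaller than $(p\eta+D)^{-1}$ precisely in the regime where the estimate is of interest).

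I do not foresee a genuine obstacle; the argument is a standard kernel/Ahlfors-regular computation. The only mild point requiring care is that the volume bound $\mu(B(x,\rho))\lesssim\rho^{D}$ must be verified for resistance balls rather than just for cells, but this is immediate from the p.c.f.\ structure of $X$ and the construction of the resistance metric.
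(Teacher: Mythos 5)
Your proposal is correct and follows essentially the same route as the paper: the paper's proof simply invokes Ahlfors regularity of $\mu$ in the resistance metric and ``integrates radially'' to get $\int_{B(x,R)}R(x,y)^{p\eta}\,d\mu(y)\lesssim\int_{0}^{R}\rho^{p\eta+D}\,\frac{d\rho}{\rho}=\frac{1}{p\eta+D}R^{p\eta+D}$, and your layer-cake/Fubini computation is precisely the detailed justification of that radial integration step. Your added observations (the trivial case $\eta\ge 0$, and that the honest constant after taking $p$-th roots is $(p\eta+D)^{-1/p}$ rather than $(p\eta+D)^{-1}$, matching what the paper's own proof actually yields) are accurate but do not change the argument.
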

\begin{proof}
The measure is Alhfors regular with dimension $D$ in the resistance metric and the space has bounded diameter.  Accordingly the only obstacle to integrability is at $y=x$ and we may integrate radially with
\begin{equation*}
	\int_{B(x,R)} |K(x,y)|^{p}\, d\mu(y)
	\lesssim \int_{0}^{R} r^{p\eta+D} \frac{dr}{r}
	=\frac{1}{p\eta+D} R^{p\eta+D}.\qedhere
	\end{equation*}
\end{proof}

From the preceding two results the following is immediate.
\begin{corollary}\label{cor:bddness}
If $s(D+1)>\frac{D}{p}$ then $\Wsp\subset L^\infty$.
\end{corollary}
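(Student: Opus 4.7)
The plan is to combine Proposition~\ref{prop:Kest} with Lemma~\ref{lem:kerinLp} via H\"older's inequality. If $f\in\Wsp=\op^{-s}L^p$ then $f=\op^{-s}g$ for some $g\in L^p(\mu)$, so
\begin{equation*}
    f(x) = \int_X K_s(x,y)\, g(y)\, d\mu(y).
\end{equation*}
H\"older applied to this integral, with conjugate exponent $p'=p/(p-1)$, yields
\begin{equation*}
    |f(x)| \leq \|K_s(x,\cdot)\|_{L^{p'}(\mu)}\, \|g\|_{L^p(\mu)},
\end{equation*}
so it suffices to show $\|K_s(x,\cdot)\|_{L^{p'}(\mu)}$ is finite and bounded uniformly in $x$.

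Next I would invoke Proposition~\ref{prop:Kest}, which provides the pointwise bound $|K_s(x,y)|\lesssim R(x,y)^{s(D+1)-D}$, and feed this into Lemma~\ref{lem:kerinLp} with exponent $\eta=s(D+1)-D$. Since $X$ has bounded $R$-diameter (scaled to $1$), the whole space is a single ball and Lemma~\ref{lem:kerinLp} gives $\|K_s(x,\cdot)\|_{L^{p'}(\mu)}\lesssim 1$ whenever $\eta p' +D>0$. A short algebraic manipulation shows this is precisely the hypothesis of the corollary: $(s(D+1)-D)p'>-D$ is equivalent to $s(D+1)>D(1-1/p')=D/p$. The endpoint $p=\infty$ (so $p'=1$) is handled identically, since then the condition reduces to $s(D+1)>0$ which is automatic.

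There is no substantive obstacle here — all the analytic work has been carried out in Propositions~\ref{prop:Rests}--\ref{prop:Kest} and Lemma~\ref{lem:kerinLp}, and the corollary is essentially a Sobolev embedding of Morrey type proved by the standard kernel/H\"older argument. The only thing to be careful about is the bookkeeping of exponents when translating between $p$ and $p'$, and the observation that the boundedness of $\diam_R(X)$ (together with Ahlfors regularity) ensures the only integrability issue is the local one at $y=x$ already covered by Lemma~\ref{lem:kerinLp}.
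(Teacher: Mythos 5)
Your proof is correct and is exactly the argument the paper intends: the paper derives Corollary~\ref{cor:bddness} as ``immediate'' from Proposition~\ref{prop:Kest} and Lemma~\ref{lem:kerinLp}, which is precisely your H\"older-plus-kernel-bound argument, and your exponent bookkeeping $(s(D+1)-D)p'>-D \iff s(D+1)>D/p$ checks out, including the $p=\infty$ endpoint.
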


\section{Local behavior of functions in $\Wsp$}\label{Section:localbehavior}
We consider two quantities at a point $q\in V_{n}$.  Let $w$ be a word with $|w|=n$ and such that $q=F_{w}(q')$, $q'\in V_{0}$ and define
\begin{gather}
	\op_{m} u(q) = \sum_{x\sim_{m} q} \bigl( u(q)-u(x)\bigr)\\
	\delta_{m} u(F_{w}(q')) = \sum_{\substack{x\sim_{m} q\\x\in F_{w}(X)}} \bigl( u(q)-u(x)\bigr) \label{eqn:defnofdeltam}
	\end{gather}
where $\delta_m$ is only defined for $m\geq n$. Note that we write $\delta_{m}u(F_{w}(q'))$ rather than $\delta_{m}u(q)$ to emphasize the dependence on $w$.  Evidently $\op_{m}u(q)$ is obtained by summing $\delta_{m}u(F_{w}(q'))$ over the $n$-cells that meet at $q$, or more precisely those choices of $w$ of length $|w|=n$ and points $q'\in V_{0}$ such that $F_{w}(q')=q$.

Strichartz~\cite{S} used bounds of the type $|\op_{m}u|\lesssim r^{m\eta}$ to characterize H\"{o}lder-Zygmund spaces for a range of exponents $\eta$, and in particular to prove a Sobolev embedding theorem.  His Theorem~3.13 is a special case of the $Q=\infty$ statement in our next result.
\begin{theorem}\label{thm:holderembed}
Let $p\in(1,\infty]$, $s\in(0,1)$ and $q\in V_{n}$. If $s(D+1)>\frac{D}{p}$ and $u=\op^{-s}f$ for $f\in L^p$ then 
\begin{equation}\label{eqn:MainDeltamests}
	\|\op_{m}u(\cdot)\|_{l^{Q}(V_{m})}
	\lesssim
	\begin{cases}
	C(s) r^{m(s(D+1)-D/Q)} \|f\|_{p} &\text{if $1\leq Q\leq p$},\\
	C(s) r^{m(s(D+1)-D/p)} \|f\|_{p} &\text{if $p< Q\leq \infty$.}
	\end{cases}
	\end{equation}
\end{theorem}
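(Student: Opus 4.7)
The plan is to represent $u = \op^{-s}f$ using the Riesz kernel from Proposition~\ref{prop:Kest} so that
\[
\op_{m}u(q) = \int_{X} f(y)\, K^{(m)}_{s}(q,y)\, d\mu(y), \qquad K^{(m)}_{s}(q,y) := \sum_{x\sim_{m}q}\bigl( K_{s}(q,y) - K_{s}(x,y)\bigr).
\]
The desired $l^{Q}$ estimates will follow once we control the size and decay of $K^{(m)}_{s}(q,\cdot)$ at scale $m$, combined with an appropriate endpoint argument for $Q\in\{p,\infty\}$ and Hölder nesting for $Q<p$.

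\textbf{Step 1: kernel regularity in the first variable.} For $x\sim_{m}q$ one has $R(q,x)\lesssim r^{m}$, and I would estimate $|K_{s}(q,y)-K_{s}(x,y)|$ in two regimes. When $R(q,y)\lesssim r^{m}$ the triangle inequality plus the size bound $|K_{s}(\cdot,y)|\lesssim R(\cdot,y)^{s(D+1)-D}$ from Proposition~\ref{prop:Kest} is enough. When $R(q,y)\gtrsim r^{m}$, I would use $K_{s}=\int_{0}^{\infty}\lambda^{-s}G_{\lambda}\,d\lambda$, splitting at $\lambda\sim r^{-m(D+1)}$. For large $\lambda$, I bound each term via the exponential decay in~\eqref{eqn:Rests}; for small $\lambda$, I would use that elements of $\dom(\DF)$ are $\tfrac{1}{2}$-Hölder in $R$, giving $|G_{\lambda}(q,y)-G_{\lambda}(x,y)|^{2}\le \DF(G_{\lambda}(\cdot,y))R(q,x)$, and then control $\DF(G_{\lambda}(\cdot,y))\lesssim G_{\lambda}(y,y)\lesssim \lambda^{-1/(D+1)}$ via the resolvent equation. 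Integrating against the $\lambda^{-s}$ weight and using~\eqref{eqn:integralest0A}--\eqref{eqn:integralestAinfty} yields a bound of the form $|K_{s}(q,y)-K_{s}(x,y)|\lesssim r^{m\eta}R(q,y)^{s(D+1)-D-\eta}$ for some $\eta>0$.

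\textbf{Step 2: the $Q=\infty$ case.} Hölder's inequality gives $|\op_{m}u(q)| \le \|K^{(m)}_{s}(q,\cdot)\|_{L^{p'}}\|f\|_{p}$. Integrating the near-regime contribution radially via Lemma~\ref{lem:kerinLp} on a ball of radius $\sim r^{m}$ gives a factor $r^{m(s(D+1)-D/p)}$, and the far-regime contribution from Step~1 yields the same factor after integrating $r^{m\eta}R(q,y)^{s(D+1)-D-\eta}$ over $R(q,y)\ge r^{m}$ (again by Lemma~\ref{lem:kerinLp}, valid provided $\eta$ is chosen so that the resulting exponent is $L^{p'}$-integrable). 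Thus $\|\op_{m}u\|_{l^{\infty}(V_{m})} \lesssim r^{m(s(D+1)-D/p)}\|f\|_{p}$, which also covers all $Q\ge p$ by the discrete nesting $\|\cdot\|_{l^{Q}}\le\|\cdot\|_{l^{p}}$ on a common counting measure.

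\textbf{Step 3: the $Q=p$ case and $1\le Q<p$.} Because a straight pointwise-to-$l^{p}$ conversion costs an extra factor of $r^{-mD/p}$, the $Q=p$ case demands a duality argument:
\[
\|\op_{m}u\|_{l^{p}(V_{m})} = \sup_{\|c\|_{l^{p'}}\le 1}\int f(y)\sum_{q\in V_{m}}c_{q}K^{(m)}_{s}(q,y)\,d\mu(y) \le \|f\|_{p}\,\sup_{\|c\|_{l^{p'}}\le1}\Bigl\|\sum_{q}c_{q}K^{(m)}_{s}(q,\cdot)\Bigr\|_{L^{p'}}.
\]
I would bound the right-hand $L^{p'}$ norm by partitioning $X$ into $m$-cells and estimating cell by cell, using the kernel bounds of Step~1 together with the bounded overlap of the influence zones of neighboring $q\in V_{m}$; this yields the factor $r^{m(s(D+1)-D/p)}$ without loss. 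For $1\le Q<p$ the nesting $\|v\|_{l^{Q}(V_{m})}\le |V_{m}|^{1/Q-1/p}\|v\|_{l^{p}(V_{m})}\lesssim r^{-mD(1/Q-1/p)}\|v\|_{l^{p}(V_{m})}$ then yields exactly the claimed exponent $s(D+1)-D/Q$.

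The main obstacle I expect is Step~1 (ensuring the Hölder exponent $\eta$ in the far-regime bound is compatible with $L^{p'}$-integrability for every $p>1$) and the cell-by-cell summation in Step~3, which must avoid losing powers of $r^{-m}$ from overlaps.
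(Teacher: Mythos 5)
There is a genuine gap, and it sits exactly where you suspected: Step~1. Your far-regime estimate for small $\lambda$ rests on the global energy bound $|G_{\lambda}(q,y)-G_{\lambda}(x,y)|^{2}\le \DF(G_{\lambda}(\cdot,y))\,R(q,x)$ together with $\DF(G_{\lambda}(\cdot,y))\lesssim G_{\lambda}(y,y)\lesssim(1+\lambda)^{-1/(D+1)}$. That chain is correct, but it carries \emph{no decay in $R(q,y)$ at all}: the energy is a global quantity, so the resulting bound is spatially uniform. Integrating $\lambda^{-s}$ over $\lambda\le r^{-m(D+1)}$ then gives at best $|K_{s}(q,y)-K_{s}(x,y)|\lesssim r^{m(s(D+1)-D)}$ (and only $C(s)r^{m/2}$ when $s\ge 1-\tfrac{1}{2(D+1)}$), uniformly in $y$. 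This does not yield the asserted form $r^{m\eta}R(q,y)^{s(D+1)-D-\eta}$ with a useful $\eta$: the only $\eta$ consistent with a uniform bound of size $r^{m(s(D+1)-D)}$ on $\{R\ge r^{m}\}$ satisfies $\eta\le s(D+1)-D$, whereas running your own Steps~2--3 shows one needs $\eta$ strictly larger than $s(D+1)-D/p$ (in fact larger than $s(D+1)$, so that the dyadic sums over annuli and over $q\in V_{m}$ converge). Concretely, with the uniform bound Step~2 only produces $\|\op_{m}u\|_{l^{\infty}(V_{m})}\lesssim r^{m(s(D+1)-D)}\|f\|_{p}$, off from the theorem by the factor $r^{-mD/p'}$, and in Step~3 the ``influence zones'' of distinct $q\in V_{m}$ overlap totally (every far zone is essentially all of $X$), so no bounded-overlap or counting argument can recover the sharp exponent. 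Spatial decay of the differenced kernel is precisely what makes the summation over $V_{m}$ work, and the $\tfrac12$-H\"{o}lder energy estimate destroys it.

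The missing idea -- the one the paper uses in Proposition~\ref{prop:interpests} and Corollary~\ref{cor:DdmKsests} -- is not to estimate the differences $K_{s}(q,y)-K_{s}(x,y)$ one at a time, but to exploit the cancellation in the whole sum over $x\sim_{m}q$: if $\psi_{m}$ is the piecewise harmonic tent of scale $m$ at $q$, then $r^{-m}\op_{m}G_{\lambda}(q,y)=\DF(G_{\lambda}(\cdot,y),\psi_{m})=-\lambda\int G_{\lambda}(x,y)\psi_{m}(x)\,d\mu(x)$ whenever $y$ avoids the cells supporting $\psi_{m}$. This identity produces the gain $r^{m}\lambda\mu^{m}$ \emph{while retaining} the exponential factor $\exp(-cR(q,y)^{\gamma}\lambda^{\gamma/(D+1)})$ from \eqref{eqn:Rests}, because $\psi_{m}$ is supported near $q$; interpolating with the trivial bound \eqref{eqn:trivialDmG} and integrating in $\lambda$ gives $|\op_{m}K_{s}(q,y)|\lesssim C(s)(r\mu)^{m}R(q,y)^{(s-1)(D+1)-D}$, i.e.\ your desired estimate with $\eta=D+1$. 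Once that is in hand, the rest of the paper's proof is close in spirit to your Steps~2--3 (dyadic annuli $X_{j}$ around $q$, H\"{o}lder in $y$, Lemma~\ref{lem:kerinLp}, and the counting bound $2^{jD}$ on the number of $q\in V_{m}$ whose $j$-th annulus contains a given $y$), so replacing Step~1 by this Gauss--Green/Dirichlet-form argument would let your outline go through. A pure ``difference of kernels'' route could only succeed if you localized the energy of $G_{\lambda}(\cdot,y)$ to the cells containing $q$ -- a quantity that does decay in $R(q,y)$ -- but estimating that local energy again requires integration by parts on the cell, i.e.\ the same mechanism.
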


The quantities $\delta_{m}u(F_{w}(q'))$ are related to the normal derivative $du(F_{w}(q'))$.  Using them we can give sufficient conditions for a function in $\Wsp$ to have a normal derivative at a vertex in $V_{\ast}$ and obtain it by integration against a kernel.
\begin{theorem}\label{thm:normalderiv}
Let $p,Q\in(1,\infty]$, $s\in(0,1)$ and $q=F_{w}(q')$, $q'\in V_{0}$ be a vertex in $V_{n}$. Define a normal derivative kernel by
\begin{equation*}
	dK_{s}(F_{w}(q'),y)=\int_{0}^{\infty} \lambda^{-s}dG_{\lambda}(F_{w}(q'),y)\,d\lambda.
	\end{equation*}
where $dG_\lambda$ is the normal derivative defined in~\eqref{eqn:d}.
If $s(D+1)>\frac{D}{p}+1$ then $dK_{s}(F_{w}(q'),\cdot)$  is in  $L^{p/(p-1)}(X,\mu)$ and for $m\geq n$
\begin{equation*}
	\biggl\| \delta_{m}u(F_{w}(\cdot)) - r^{m}\int dK_{s}(F_{w}(\cdot),y)f(y)\,d\mu(y) \biggr\|_{l^{Q}(V_{m})}
	\lesssim 
	\begin{cases}
	C(s) r^{m(s(D+1)-D/Q)} \|f\|_{p} &\text{if $1<Q\leq  p$},\\
	C(s) r^{m(s(D+1)-D/p)} \|f\|_{p} &\text{if $p< Q\leq\infty$.}
	\end{cases}
	\end{equation*}
In particular $\int dK_{s}(F_{w}(q'),y)f(y)\,d\mu(y)$ is the normal derivative of $u$ at $F_{w}(q')$ where $u=\op^{-s}f$ for $f\in L^{p}$,  in the sense that $r^{-m}$ times the $Q=\infty$ case of the estimate converges to zero as $m\to\infty$, see~\eqref{eqn:d}.
\end{theorem}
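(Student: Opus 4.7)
The plan is to adapt the argument template of Proposition~\ref{prop:Kest} and Theorem~\ref{thm:holderembed}, carrying out for the normal derivative kernel what was done earlier for the Riesz kernel itself. First I would establish a pointwise bound
\begin{equation*}
|dG_\lambda(F_w(q'), y)| \lesssim \exp\bigl(-c R(F_w(q'), y)^\gamma \lambda^{\gamma/(D+1)}\bigr),
\end{equation*}
by applying the local Gauss--Green formula \eqref{eqn:locGG} on a cell $F_{w'}(X)$ with $F_w(q')\in F_{w'}(V_0)$, taking as test function $v$ the harmonic extension on $F_{w'}(X)$ equal to $1$ at $F_w(q')$ and $0$ at the other vertices of $F_{w'}(V_0)$. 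Since $\op v=0$ on $F_{w'}(X)$ and $\op G_\lambda(\cdot,y)=-\lambda G_\lambda(\cdot,y)$ there when $y\notin F_{w'}(X)$, the formula collapses to
\begin{equation*}
dG_\lambda(F_w(q'), y) = \sum_{p\in F_{w'}(V_0)} G_\lambda(p,y)\, dv(p) + \lambda \int_{F_{w'}(X)} G_\lambda(x,y) v(x)\, d\mu(x).
\end{equation*}
Self-similarity of the resistance form gives $|dv(p)|\lesssim r^{-|w'|}$ and $\mu(F_{w'}(X))=\mu^{|w'|}$; choosing $|w'|$ so that $r^{|w'|}$ matches the natural spatial scale $\lambda^{-1/(D+1)}$ of the resolvent makes both contributions of size $\exp(\ldots)$ by Proposition~\ref{prop:Rests}. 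The range where $y$ is close to $F_w(q')$ is treated by the same scheme, using that the exponential factor is then bounded by $1$.

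Integrating this pointwise bound against $\lambda^{-s}\,d\lambda$ via \eqref{eqn:integralest0A} with $a=1-s$, $b=\gamma/(D+1)$, and $\kappa=cR(F_w(q'),y)^\gamma$ yields
\begin{equation*}
|dK_s(F_w(q'),y)| \lesssim C(s)\, R(F_w(q'),y)^{s(D+1)-D-1},
\end{equation*}
and Lemma~\ref{lem:kerinLp} applied with $\eta = s(D+1)-D-1$ and exponent $p/(p-1)$ gives $dK_s(F_w(q'),\cdot)\in L^{p/(p-1)}(X,\mu)$; the integrability condition $\eta\cdot\tfrac{p}{p-1}>-D$ reduces exactly to the hypothesis $s(D+1)>D/p+1$.

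For the main estimate, writing $u(x)=\int K_s(x,y)f(y)\,d\mu(y)$ and exchanging $\delta_m$ with the integral reduces the problem to controlling in $\ell^Q(V_m)$ the kernel
\begin{equation*}
\Phi_m(F_w(q'),y) = \int_0^\infty \lambda^{-s}\bigl(\delta_m G_\lambda - r^m\, dG_\lambda\bigr)(F_w(q'),y)\, d\lambda.
\end{equation*}
I would split the $\lambda$-integral at the critical scale $\lambda^*=r^{-m(D+1)}$. For $\lambda\geq\lambda^*$ each of $|\delta_m G_\lambda|$ (estimated by the two pointwise resolvent bounds from Proposition~\ref{prop:Rests}) and $r^m|dG_\lambda|$ is of size $r^m\exp(\ldots)$; for $\lambda\leq\lambda^*$ the approximation rate
\begin{equation*}
\bigl|r^{-m}\delta_m G_\lambda(F_w(q'),y) - dG_\lambda(F_w(q'),y)\bigr| \lesssim r^{mD}\,\lambda\,(1+\lambda)^{-1/(D+1)}\exp(\ldots)
\end{equation*}
is derived from one more application of the local Gauss--Green formula on a scale-$m$ subcell of $F_w(X)$ containing $F_w(q')$, using $\op G_\lambda=-\lambda G_\lambda$. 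Integrating each piece over $\lambda$ via \eqref{eqn:integralest0A}--\eqref{eqn:integralestAinfty} yields a pointwise bound for $\Phi_m$ that carries the prefactor $r^{m\,s(D+1)}$ together with exponential decay in $R(F_w(q'),y)/r^m$, after which the $\ell^Q(V_m)$ estimates in both regimes $Q\leq p$ and $Q>p$ follow by Minkowski's inequality and duality, exactly as in the proof of Theorem~\ref{thm:holderembed}. The final identification of $\int dK_s(F_w(q'),y)f(y)\,d\mu(y)$ as the normal derivative $du(F_w(q'))$ comes from the $Q=\infty$ case upon dividing by $r^m$ and sending $m\to\infty$ via the definition \eqref{eqn:d}.

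The main obstacle is the pointwise bound on $dG_\lambda$: no direct heat-kernel estimate for the normal derivative is available, so it must be extracted from the resolvent equation through a scale-matched application of the local Gauss--Green formula with an appropriately chosen piecewise-harmonic test function. The split $\lambda$-integration bookkeeping for the error kernel $\Phi_m$ is also somewhat delicate, but follows the template already used in Proposition~\ref{prop:Rests}.
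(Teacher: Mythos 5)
Your outline reproduces the paper's own architecture: a Gauss--Green identity with a piecewise-harmonic test function to bound $dG_{\lambda}$, integration in $\lambda$ to control $dK_{s}$, a second Gauss--Green application on the scale-$m$ subcell to capture the cancellation in $r^{-m}\delta_{m}G_{\lambda}-dG_{\lambda}$ with the split at $\lambda^{*}=r^{-m(D+1)}$, and finally the annular Minkowski/H\"{o}lder scheme of Theorem~\ref{thm:holderembed}. However, your first step asserts a pointwise bound that this method cannot deliver. The normal derivative \eqref{eqn:d} is localized to the fixed cell $F_{w}(X)$ with $|w|=n$: it involves only neighbors $x\in F_{w}(X)$. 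For the boundary term of \eqref{eqn:locGG} on $F_{w'}(X)$ to equal $dG_{\lambda}(F_{w}(q'),y)$ you need $F_{w'}(X)\subseteq F_{w}(X)$ with $F_{w}(q')\in F_{w'}(V_{0})$, hence $|w'|\geq n$; indeed a vertex $q\in V_{n}\setminus V_{n-1}$ is not a boundary vertex of any coarser cell at all, and where a coarser cell is available its normal derivative is a genuinely different quantity, involving neighbors outside $F_{w}(X)$. Consequently your scale-matching $r^{|w'|}\simeq\lambda^{-1/(D+1)}$ is only realizable when $\lambda\geq r^{-n(D+1)}$. For $\lambda<r^{-n(D+1)}$ one is forced to take $w'=w$, and the boundary term is then of size $r^{-n}(1+\lambda)^{-1/(D+1)}\exp\bigl(-cR(q,y)^{\gamma}\lambda^{\gamma/(D+1)}\bigr)$, larger than your claimed bound by a factor approaching $r^{-n}$ as $\lambda\to0$. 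This is precisely why Proposition~\ref{prop:dRest} carries the factor $\bigl(1+r^{-n}(1+\lambda)^{-1/(D+1)}\bigr)$, and why Corollary~\ref{cor:dKdefined} has the additional term $C(s)r^{-n}R(q,y)^{s(D+1)-D}$ in the bound for $dK_{s}$. Your single-term bound $R(q,y)^{s(D+1)-D-1}$ is strictly stronger than this whenever $R(q,y)\gg r^{n}$ (the missing term then dominates), and proving it would require finer input (e.g.\ oscillation decay of harmonic functions) that neither you nor the paper invokes; as written, the key estimate is unproven.

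The gap is repairable without changing your plan, and the repair is exactly what the paper does: keep the correction factor, integrate the $r^{-n}$ term only over $\lambda<r^{-n(D+1)}$ using \eqref{eqn:integralest0A} with $A=r^{-n(D+1)}$, and carry the extra, less singular term $C(s)r^{-n}R(q,y)^{s(D+1)-D}$ through the rest of the argument. It still lies in $L^{p/(p-1)}$ under $s(D+1)>\frac{D}{p}+1$ by Lemma~\ref{lem:kerinLp}, and after multiplication by the prefactor $r^{m}$ (the vertices in the $l^{Q}(V_{m})$ norm are boundary vertices of $m$-cells, so $r^{m}r^{-n}\leq1$) it contributes at the target order $r^{m(s(D+1)-D/p)}$ on the near region $X_{0}$. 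A second, smaller inaccuracy: integrating your two regimes does not give exponential decay of the error kernel in $R(q,y)/r^{m}$; the $\lambda\leq\lambda^{*}$ piece produces only the polynomial decay $(r\mu)^{m}R(q,y)^{(s-1)(D+1)-D}$ of Corollary~\ref{cor:DdmKsests}. That decay is still summable over the annuli, since the resulting factor per annulus is $2^{j(s-1)(D+1)}$ and $s<1$, so the final $l^{Q}$ bookkeeping goes through, but the estimate as you stated it is not the one you obtain.
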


The proofs of Theorems~\ref{thm:holderembed} and~\ref{thm:normalderiv} occupy the remainder of this section.  We begin with an estimate of the  normal derivative of the resolvent kernel. 
\begin{proposition}\label{prop:dRest}
If $q=F_{w}(q')$ with $|w|=n$ and $q'\in V_{0}$ then for any $y\neq q$
\begin{equation}\label{eqn:dRest}
	\bigl|dG_{\lambda}(F_{w}(q'),y)\bigr|
	\lesssim  \bigl( 1+ r^{-n}(1+\lambda)^{-1/(D+1)} \bigr)\exp\bigl (-c R(q,y)^{\gamma}\lambda^{\gamma/(D+1)} \bigr).
	\end{equation}
\end{proposition}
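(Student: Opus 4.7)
My plan is to mirror the proof of Proposition~\ref{prop:Rests}: express $dG_\lambda$ as the Laplace transform of $dh_t$ and split the integration in time at the natural scale $A = R(q,y)^\gamma(1+\lambda)^{-(D+1-\gamma)/(D+1)}$. First I would verify the interchange
\[
dG_\lambda(F_w(q'), y) = \int_0^\infty e^{-\lambda t}\, dh_t(F_w(q'), y)\, dt,
\]
which follows from commuting the limit in~\eqref{eqn:d} with the time integral using~\eqref{eqn:HKests}.

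The principal step is a pointwise estimate of the form
\[
|dh_t(F_w(q'), y)| \lesssim \bigl(t^{-1/(D+1)} + r^{-n}\bigr)\, t^{-D/(D+1)} \exp\!\Bigl(-c_H\bigl(R(q,y)^{D+1}/t\bigr)^{\gamma/(D+1-\gamma)}\Bigr)
\]
for $0 < t < 1$, with an analogous decay for $t \geq 1$ coming from the spectral gap. The additive structure $(t^{-1/(D+1)} + r^{-n})$ reflects two regimes: the diffusive scale $t^{1/(D+1)}$ dominates when $t \gtrsim r^{n(D+1)}$ and the cell scale $r^n$ dominates otherwise. To obtain this I would apply Gauss--Green~\eqref{eqn:locGG} on $F_w(X)$ with $u = h_t(\cdot, y)$ and $v$ the harmonic function on $F_w(X)$ equal to $1$ at $F_w(q')$ and $0$ at the other points of $F_w(V_0)$, giving
\[
dh_t(F_w(q'), y) = \sum_{p \in V_0} h_t(F_w(p), y)\, dv(F_w(p)) + \int_{F_w(X)} \partial_t h_t(z, y)\, v(z)\, d\mu(z).
\]
The sum produces the $r^{-n}$ contribution via $|dv(F_w(p))| \lesssim r^{-n}$ (from self-similar scaling of the harmonic basis on a cell) combined with~\eqref{eqn:HKests}, while the integral yields the $t^{-1/(D+1)}$ contribution after using $\partial_t h_t = -\op h_t$, the analyticity of the heat semigroup (giving $\|\partial_t h_t(\cdot, y)\|_\infty \lesssim t^{-1-D/(D+1)}$), and the cell measure $r^{nD}$.

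Finally I would substitute into the Laplace integral and split into $(0, A]$, $[A, 1)$, $[1, \infty)$, applying~\eqref{eqn:integralest0A}--\eqref{eqn:integralestAinfty} on each piece exactly as in Proposition~\ref{prop:Rests}. The $r^{-n}$ summand reproduces that computation multiplied by $r^{-n}$, yielding $r^{-n}(1+\lambda)^{-1/(D+1)}\exp(-cR(q,y)^\gamma\lambda^{\gamma/(D+1)})$. The $t^{-1/(D+1)}$ summand shifts the exponent $a$ in those integral estimates by $-1/(D+1)$, so it integrates to a bounded multiple of the same exponential, contributing the constant $1$ in the stated bound.

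The main obstacle is the second step: bounding $\int_{F_w(X)} \partial_t h_t \cdot v \, d\mu$ in a way that simultaneously produces the $t^{-1/(D+1)}$ time scaling and preserves the Gaussian-type spatial decay in $R(q,y)$. A naive $L^\infty$ bound on $\partial_t h_t$ gives the correct time scaling but loses the spatial decay; recovering the decay requires combining the semigroup identity $h_t = h_{t/2}\ast h_{t/2}$ with~\eqref{eqn:HKests} applied at time $t/2$ and exploiting that $z \in F_w(X)$ implies $R(z, y) \geq R(q, y) - \diam(F_w(X))$, so the exponential factor is essentially unchanged on the cell whenever $R(q,y) \gtrsim r^n$ (and can be absorbed into the $r^{-n}$ prefactor otherwise).
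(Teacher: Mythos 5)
Your overall architecture---writing $dG_\lambda(F_w(q'),y)=\int_0^\infty e^{-\lambda t}\,dh_t(F_w(q'),y)\,dt$, proving a pointwise bound on $dh_t$, and splitting the time integral as in Proposition~\ref{prop:Rests}---is viable, and your final integration step would indeed give~\eqref{eqn:dRest} if your intermediate estimate held. The genuine gap is that the intermediate estimate cannot be proved the way you propose, namely by applying Gauss--Green~\eqref{eqn:locGG} on the \emph{fixed} cell $F_w(X)$ and bounding the boundary sum and the bulk integral separately. The boundary sum $\sum_{p\in V_0}h_t(F_w(p),y)\,dv(F_w(p))$ involves the heat kernel at corners $F_w(p)$, $p\neq q'$, lying at resistance distance $\simeq r^n$ from $q$, and at least one such corner has $|dv(F_w(p))|\simeq r^{-n}$. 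Take $y$ at distance $\epsilon\ll r^n$ from that corner and $\epsilon^{D+1}\lesssim t\ll r^{n(D+1)}$. Then $R(q,y)\simeq r^n$, and by the matching lower heat kernel bounds (valid for these fractals, cf.~\cite{FHK}) $h_t(F_w(p),y)\simeq t^{-D/(D+1)}$, so this single term has size $\simeq r^{-n}t^{-D/(D+1)}$ with no smallness whatsoever, whereas your claimed bound requires the factor $\exp\bigl(-c_H(R(q,y)^{D+1}/t)^{\gamma/(D+1-\gamma)}\bigr)\simeq\exp\bigl(-c_H(r^{n(D+1)}/t)^{\gamma/(D+1-\gamma)}\bigr)$, which is superpolynomially small in $r^{n(D+1)}/t$. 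Your suggested remedy for the case $R(q,y)\lesssim r^n$---absorbing the exponential into the $r^{-n}$ prefactor---cannot work: no polynomial prefactor absorbs a missing superpolynomially small factor. (The bulk integral suffers the same defect: whatever spatial decay you extract via $h_t=h_{t/2}\ast h_{t/2}$ is governed by $\inf_{z\in F_w(X)}R(z,y)$, which is $\simeq\epsilon$ here, not by $R(q,y)$.) Since the Gauss--Green identity is exact and the true $dh_t$ is indeed tiny in this regime, there must be near-perfect cancellation between your two terms; estimating them separately on the fixed cell simply cannot see it. This loss is fatal to the proposition, not merely to an intermediate step: integrating $r^{-n}t^{-D/(D+1)}$ against $e^{-\lambda t}$ gives $r^{-n}(1+\lambda)^{-1/(D+1)}$ with no exponential factor, and the exponential in~\eqref{eqn:dRest} is precisely what makes $dK_s(F_w(q'),\cdot)$ integrable in Corollary~\ref{cor:dKdefined} and drives everything thereafter.

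The missing idea, which is the crux of the paper's proof, is to apply Gauss--Green on a cell whose scale is \emph{matched to the spectral parameter}: one takes the sub-cell $F_{\tilde w}(X)\subset F_w(X)$ with $q\in F_{\tilde w}(V_0)$ and $r^{-|\tilde w|(D+1)}\simeq\lambda$ (equivalently, in your time picture, $r^{|\tilde w|(D+1)}\simeq t$), falling back on $F_w(X)$ itself only when $\lambda\lesssim r^{-n(D+1)}$. Because the normal derivative~\eqref{eqn:d} is a purely local quantity at $q$, shrinking the cell does not change it. On the matched cell the diameter is comparable to the diffusive length scale, so the triangle inequality for the metric comparable to $R^\gamma$ makes the exponential factor uniform over the whole cell at the cost of a constant in the exponent---exactly the step that fails on the fixed cell---while the scalings $|d\psi_{|\tilde w|}|\simeq r^{-|\tilde w|}\simeq\lambda^{1/(D+1)}$ and $\mu(F_{\tilde w}(X))\simeq\lambda^{-D/(D+1)}$ produce the two prefactors in~\eqref{eqn:dRest}. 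The paper implements this directly at the resolvent level, using $\op G_\lambda(\cdot,y)=-\lambda G_\lambda(\cdot,y)$ away from $y$ together with Proposition~\ref{prop:Rests}; besides being shorter, this avoids your secondary difficulty of proving pointwise bounds with Gaussian-type decay for $\partial_t h_t$, which do not follow from the stated assumption~\eqref{eqn:HKests} alone.
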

\begin{proof}
Fix $\lambda>0$ and let $m$ be the integer part of $\frac{-\log\lambda}{(D+1)\log r}$ so $r^{-m(D+1)}= \mu^{-m(D+1)/D} \simeq \lambda$.  If $m<n$  let $\tilde{w}=w$ and otherwise let $\tilde{w}$ be the word of length $m$ such that $q\in F_{\tilde{w}}(V_{0})\subset F_{w}(X)$. Let $\psi_{m}$ be piecewise harmonic at scale $m$ with value $1$ at $q$ and zero on $V_{m}\setminus\{q\}$.  We will apply the local Gauss-Green formula to $G_{\lambda}$ and $\psi_{|\tilde{w}|}$ on the cell $F_{\tilde{w}}(X)$.

Recall that $\op G_{\lambda}(x,y)=-\lambda G_{\lambda}(x,y)$ away from $y$ and has a Dirac mass at $y$. Apply~\eqref{eqn:locGG} to see that if $y\not\in F_{\tilde{w}}(X)$ then
\begin{equation*}
	dG_{\lambda}(F_{w}(q'),y)
	= \int_{F_{\tilde{w}}(X)} -\psi_{|\tilde{w}|}(x) \lambda G_{\lambda}(x,y)\, d\mu(x) + \sum_{p'\in V_{0}} d\psi_{|\tilde{w}|}(p')(F_{\tilde{w}}(p')) G_{\lambda}(F_{\tilde{w}}(p'),y) 
	\end{equation*}
while if $y\in F_{\tilde{w}}(X)$ the expression needs only to be modified by adding $\psi_{|\tilde{w}|}(y)$ on the right side. Now $\|\psi_{|\tilde{w}|}\|_{\infty}\leq 1$ by the maximum principle and  each $d\psi_{|\tilde{w}|}(F_{w}(p'))\simeq r^{-|\tilde{w}|}$ by scaling. Since it is also the case that $\mu(F_{\tilde{w}}(X))=\mu^{|\tilde{w}|}$ we find
\begin{align*}
	\bigl|d G_{\lambda}(F_{w}(q'),y)\bigr|
	&\lesssim \lambda \mu^{|\tilde{w}|} \bigl\|   G_{\lambda}(\cdot,y)   \bigr\|_{L^{\infty}(F_{\tilde{w}}(X))}  +  r^{-|\tilde{w}|} \sum_{p'\in V_{0}} \bigl| G_{\lambda}(F_{\tilde{w}}(p'),y) \bigr|\\
	&\lesssim \bigl( \lambda \mu^{|\tilde{w}|} + r^{-|\tilde{w}|} \bigr)  \bigl\|   G_{\lambda}(\cdot,y)   \bigr\|_{L^{\infty}(F_{\tilde{w}}(X))} 
	\end{align*}
with the caveat that we must add $1$ to the right side if $y\in F_{\tilde{w}}(X)$.  Substituting the estimate of Proposition~\ref{prop:Rests} and using $\mu^{|\tilde{w}|}\leq \mu^{m}\simeq\lambda^{-D/(D+1)}$ we obtain
\begin{equation*}
	 \bigl|dG_{\lambda}(F_{w}(q'),y)\bigr|
	\lesssim \bigl( 1+ r^{-|\tilde{w}|}(1+\lambda)^{-1/(D+1)} \bigr) \exp \bigl( -c  \inf_{x\in F_{\tilde{w}}(X)} \lambda^{\gamma/(D+1)} R(x,y)^{\gamma} \bigr).
	\end{equation*}
This is valid even if $y\in F_{\tilde{w}}(X)$ because in this case the infimum in the exponent is zero, so the the Dirac mass term is absorbed into the estimate.  Now if $m\geq n$ then $r^{-\tilde{w}}=r^{-m}\simeq\lambda^{1/(D+1)}$ and the first factor is just a constant.  Otherwise $|\tilde{w}|=n>m$ and the $r^{-n}\lambda^{-1/(D+1)}$ term dominates.

To complete the proof we recall that $R(x,y)^{\gamma}$ is comparable to a metric and use the triangle inequality and the fact that   $R(q,x)\lesssim r^{|\tilde{w}|}\leq \lambda^{-1/(D+1)}$ if $x\in F_{\tilde{w}}(X)$ to obtain
\begin{equation*}
	\lambda^{\gamma/(D+1)} R(q,y)^{\gamma} 
	\leq \lambda^{\gamma/(D+1)} R(q,x)^{\gamma} + \lambda^{\gamma/(D+1)} R(x,y)^{\gamma} 
	\leq c' + \inf_{x\in F_{w}(X)} \lambda^{\gamma/(D+1)} R(x,y)^{\gamma}
	\end{equation*}
from which the result follows.
\end{proof}

\begin{corollary}\label{cor:dKdefined}
If $s\in(0,1)$ and $|w|=n$ the kernel
\begin{equation*}
	dK_{s}(F_{w}(q'),y)=\int_{0}^{\infty} \lambda^{-s}dG_{\lambda}(F_{w}(q'),y)\,d\lambda
	\end{equation*}
satisfies $\bigl| dK_{s}(F_{w}(q'),y)\bigr|\lesssim C(s) r^{-n}R(q,y)^{1-(D+1)(1-s)}+ C R(q,y)^{-(D+1)(1-s)}$.  If $p\in(1,\infty]$  and $s(D+1)>\frac{D}{p}+1$ this is in $L^{p/(p-1)}(d\mu(y))$.
\end{corollary}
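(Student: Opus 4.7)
The plan is to estimate $dK_{s}(F_{w}(q'),y)$ by substituting the pointwise bound~\eqref{eqn:dRest} into the defining integral $\int_0^\infty \lambda^{-s}\,dG_{\lambda}(F_{w}(q'),y)\,d\lambda$ and then splitting the factor $1 + r^{-n}(1+\lambda)^{-1/(D+1)}$ into its two pieces to obtain two $\lambda$-integrals, each of which is handled by the estimates~\eqref{eqn:integralest0A}--\eqref{eqn:integralestAinfty}.

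The first resulting integral, $\int_0^\infty \lambda^{-s} \exp\bigl(-cR(q,y)^\gamma \lambda^{\gamma/(D+1)}\bigr)\,d\lambda$, is directly of the form handled by~\eqref{eqn:integralest0A} with $A=\infty$, $a=1-s>0$, $b=\gamma/(D+1)$, and $\kappa=cR(q,y)^\gamma$. This immediately yields the bound $\lesssim R(q,y)^{-(D+1)(1-s)}$, which gives the second term of the claim and essentially reproduces the computation in Proposition~\ref{prop:Kest}.

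For the second integral, which carries the prefactor $r^{-n}$ and the additional weight $(1+\lambda)^{-1/(D+1)}$, I would rescale by setting $\mu=\lambda\cdot (cR(q,y)^\gamma)^{(D+1)/\gamma}$ so that the exponential becomes $e^{-\mu^{\gamma/(D+1)}}$. The change of variables pulls out an overall factor of $R(q,y)^{-(D+1)(1-s)}$ and converts the weight into $(1+\mu R(q,y)^{-(D+1)})^{-1/(D+1)}$; for $\mu \gtrsim R(q,y)^{D+1}$ this is comparable to $\mu^{-1/(D+1)}R(q,y)$, and the extra factor of $R(q,y)$ combines with the rescaling prefactor to produce $r^{-n}R(q,y)^{1-(D+1)(1-s)}$, matching the first term of the claim. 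The complementary range $\mu\in[0,R(q,y)^{D+1}]$ gives a smaller contribution since the weight there is bounded by $1$ and the range is short.

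For the $L^{p/(p-1)}$ conclusion I would apply Lemma~\ref{lem:kerinLp} to each term of the pointwise bound. The resulting exponents in the sense of the lemma are $\eta_1=1-(D+1)(1-s)=s(D+1)-D$ and $\eta_2=-(D+1)(1-s)=s(D+1)-D-1$, and the integrability condition $\eta\cdot p/(p-1) > -D$ rearranges to $s(D+1)>D/p$ for $\eta_1$ and to $s(D+1)>1+D/p$ for $\eta_2$. The assumed hypothesis is precisely the latter, and it implies the former, so both terms lie in $L^{p/(p-1)}(d\mu(y))$. The main obstacle is the second $\lambda$-integral: the naive inequality $(1+\lambda)^{-1/(D+1)}\leq \lambda^{-1/(D+1)}$ would force a non-integrable singularity at $\lambda=0$ whenever $s+1/(D+1)>1$, so the rescaling above (or an equivalent split at $\lambda=1$, using~\eqref{eqn:integralest0A} on $[0,1]$ and~\eqref{eqn:integralestAinfty} on $[1,\infty)$) is essential for recovering the stated exponent on $R(q,y)$.
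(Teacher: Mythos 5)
Your proposal follows the paper's own proof essentially step for step: the same substitution of the bound from Proposition~\ref{prop:dRest}, the same splitting into the two terms, the same treatment of the term without $r^{-n}$ (which just reproduces Proposition~\ref{prop:Kest}), and the same application of Lemma~\ref{lem:kerinLp} at the end. Your rescaling with a split at $\mu\simeq R^{D+1}$ (i.e.\ $\lambda\simeq1$) is only a cosmetic variant of the paper's split of the $r^{-n}$ term at $\lambda=r^{-n(D+1)}$.

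There is, however, a genuine problem in the step you yourself flag as the main obstacle, and you should know the paper's proof shares it. Your claim that the range $\mu\gtrsim R^{D+1}$ produces $r^{-n}R^{1-(D+1)(1-s)}$ requires
\begin{equation*}
	\int_{cR^{D+1}}^{\infty}\mu^{-s-1/(D+1)}\,e^{-\mu^{\gamma/(D+1)}}\,d\mu\lesssim 1 \quad\text{uniformly in } R,
\end{equation*}
which holds if and only if $s+\frac{1}{D+1}<1$, i.e.\ $s(D+1)<D$. In the regime $s(D+1)>D$ --- which is not a corner case: it is forced by the hypothesis $s(D+1)>\frac{D}{p}+1$ whenever $p<\frac{D}{D-1}$ --- this integral grows like $R^{(D+1)(1-s)-1}$ as $R\to0$, so that range contributes $\simeq r^{-n}$, strictly larger than the claimed $r^{-n}R^{s(D+1)-D}$ for small $R$; likewise the ``complementary range'' $\mu\in[0,R^{D+1}]$ contributes $\simeq r^{-n}$, not something smaller. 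No rearrangement of the $\lambda$-integral can fix this, because the quantity being bounded, $\int_0^\infty\lambda^{-s}(1+\lambda)^{-1/(D+1)}e^{-cR^\gamma\lambda^{\gamma/(D+1)}}\,d\lambda$, is bounded below by a positive constant independent of $R$ (already the piece $\lambda\in[0,1]$ is), while $R^{s(D+1)-D}\to0$. The paper's proof fails at exactly the same point: its remark that \eqref{eqn:integralest0A} applies with $a=1-s-\frac{1}{D+1}$ assuming only $1-s>0$ is untenable when $a<0$ and $\kappa$ is small, for the same reason. What both arguments actually establish is the weaker bound $\bigl|dK_s(F_w(q'),y)\bigr|\lesssim C(s)\bigl(r^{-n}\max\bigl\{1,R(q,y)^{s(D+1)-D}\bigr\}+R(q,y)^{-(D+1)(1-s)}\bigr)$, and this still yields the second assertion of the corollary --- the constant $r^{-n}$ lies in $L^{p/(p-1)}(\mu)$ because $\mu(X)<\infty$ --- which is all that is used downstream (integrability of $dK_s$ against $f\in L^p$ in Theorem~\ref{thm:normalderiv}). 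So your write-up is exactly as strong, and exactly as flawed, as the paper's; to make it airtight, state the weaker pointwise bound and observe that it suffices.
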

\begin{proof}
The estimate from Proposition~\ref{prop:dRest} has two terms.  The term depending on $r$ is relevant for $\lambda<r^{-n(D+1)}$, so should be used in~\eqref{eqn:integralest0A} with $A=r^{-n(D+1)}$ and $a=1-s-\frac{1}{D+1}$, $b=\frac{\gamma}{D+1}>0$ and $\kappa=cR(q,y)^{\gamma}$. Note that we only need assume $1-s>0$ and not $a>0$ because part of our power of $\lambda$  is a factor $(1+\lambda)^{-1/(D+1)}$. Including the factor $r^{-n}$ from the integrand gives a result is bounded by $r^{-n}(cR(q,y))^{1-(1-s)(D+1)}$.  The other term (which only contains the exponential) can be done in the same way but with $a=1-s$ and $A=\infty$ to get a bound by $(cR(q,y))^{-(1-s)(D+1)}$.  Both pieces are then in $L^{p/(p-1)}(d\mu(y))$ provided $(1-s)(D+1)p/(p-1)<D$ by Lemma~\ref{lem:kerinLp}.
\end{proof}

We now examine the difference operators $\op_{m}$ and $\delta_{m}$.  For $u=\op^{-s}f=\int K_{s}(x,y)f(y)\,d\mu(y)$ write
\begin{equation}\label{eqn:DmGusingLap}
	\op_{m}u(q) = \int_{X} \op_{m}K_{s}(q,y)f(y)\,d\mu = \int_{X}\int_{0}^{\infty} \lambda^{-s} \op_{m}G_{\lambda}(q,y)f(y)\,d\lambda\,d\mu(y)
	\end{equation}
and similarly,  if $s$ and $p$ are as in Corollary~\ref{cor:dKdefined} so that $du(F_{w}(q'))=\int dK_{s}(q,y)f(y)\,d\mu(y)$ is well defined,
\begin{align}
	\lefteqn{\delta_{m}u(F_{w}(q')) -r^{m}du(F_{w}(q'))}\quad& \notag\\
	&=\int_{X} \bigl(\delta_{m}K_{s}(F_{w}(q'),y) - r^{m}dK_{s}(F_{w}(q'),y) \bigr) f(y)\,d\mu  \notag\\
	&= \int_{X}\int_{0}^{\infty} \lambda^{-s} \bigl( \delta_{m}G_{\lambda}(F_{w}(q'),y)-r^{m}dG_{\lambda}(F_{w}(q'),y)\bigr)f(y)\,d\lambda\,d\mu(y).  \label{eqn:dmGusingLap}
	\end{align}
We wrote these expressions in this form because both quantities are  readily estimated.  First we note trivial estimates that do not account for any cancellation.  By summing~\eqref{eqn:Rests} over the $m$-scale neighbors of $q$
\begin{equation}\label{eqn:trivialDmG}
	|\op_{m}G_{\lambda}(q,y) | \lesssim (1+\lambda)^{-1/(D+1)} \exp\Bigl( - c \min_{x\sim_{m}q} R(x,y)^{\gamma} \lambda^{\gamma/(D+1)} \Bigr).
	\end{equation}
Similarly the crude bound from~\eqref{eqn:Rests} and~\eqref{eqn:dRest} gives (using $m>n$)
\begin{equation}\label{eqn:trivialdmG}
	\bigl|\delta_{m}G_{\lambda}(F_{w}(q'),y) - r^{m} dG_{\lambda}(F_{w}(q'),y) \bigr|
	\lesssim \bigl( r^{m} + (1+\lambda)^{-1/(D+1)} \bigr) \exp\Bigl( - c \min_{x\sim_{m}q} R(x,y)^{\gamma} \lambda^{\gamma/(D+1)} \Bigr).
	\end{equation}

These estimates cannot be substantially improved if $y$ is close to $q$, but if it is not then we can estimate using regularity of $G_\lambda$.
\begin{proposition}\label{prop:interpests}
Fix $q\in V_{n}$, $m>n$, $\theta\in[0,1]$.  If $y$ is not in any $(m-1)$-cell containing $q$ then
\begin{gather*}
	\bigl|\op_{m}G_{\lambda}(q,y)\bigr|\lesssim \frac{(r\mu)^{m\theta} \lambda^{\theta}}{(1+\lambda)^{1/(D+1)}} \exp \Bigl( -c R(q,y)^{\gamma} 		\lambda^{\gamma/(D+1)} \Bigr) \\
	\bigl|\delta_{m}G_{\lambda}(F_{w}(q'),y) - r^{m} dG_{\lambda}(F_{w}(q'),y) \bigr|
	\lesssim \Bigl( \frac{(r\mu)^{m\theta}\lambda^{\theta}}{(1+\lambda)^{1/(D+1)}} +  r^{m}\mu^{m\theta}\lambda^{\theta D/(D+1)}\Bigr) \exp \Bigl( -c R(q,y)^{\gamma} \lambda^{\gamma/(D+1)} \Bigr)
\end{gather*}
\end{proposition}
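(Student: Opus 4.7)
The plan is to interpolate between the trivial $\theta=0$ bounds~\eqref{eqn:trivialDmG} and~\eqref{eqn:trivialdmG} and refined $\theta=1$ bounds that exploit the resolvent equation $\op G_\lambda(\cdot,y)=-\lambda G_\lambda(\cdot,y)$ away from $y$. The idea is to represent the discrete operators $\op_m$ and $\delta_m - r^m d$ applied to $u=G_\lambda(\cdot,y)$ as integrals of $\op u$ against a carefully chosen piecewise $m$-harmonic test function, which produces extra factors of $r^m\lambda$ (from $\op u=-\lambda u$ and the scaling of the energy identity) and $\mu^m$ (from the measure of the support of the test function).

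For the $\op_m$ estimate, introduce the test function $\psi_{q,m}$ that is piecewise $m$-harmonic on $X$ with $\psi_{q,m}(q)=1$ and $\psi_{q,m}\equiv 0$ on $V_m\setminus\{q\}$. By $\DF$-orthogonality of piecewise $m$-harmonic functions to functions vanishing on $V_m$, one has the exact identity $\DF(u,\psi_{q,m})=r^{-m}\DF_m(u,\psi_{q,m})=r^{-m}\op_m u(q)$. Combined with $\DF(u,\psi_{q,m})=\int(\op u)\psi_{q,m}\,d\mu$ (a $V_0$-boundary term appears if $q\in V_0$, which is controlled by Proposition~\ref{prop:dRest}) and substituted into $\op G_\lambda(\cdot,y)=-\lambda G_\lambda(\cdot,y)$, this yields
\[\op_m G_\lambda(q,y)=-r^m\lambda\int_X G_\lambda(x,y)\psi_{q,m}(x)\,d\mu(x).\]
Because $|\psi_{q,m}|\le 1$ and its support (the $m$-cells meeting $q$) has measure $\lesssim\mu^m$, and because the hypothesis that $y$ avoids every $(m-1)$-cell containing $q$ forces $R(x,q)\lesssim r^m\ll r^{m-1}\lesssim R(q,y)$, the quasi-metric property of $R^\gamma$ gives $R(x,y)^\gamma\gtrsim R(q,y)^\gamma$ on the support. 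Applying Proposition~\ref{prop:Rests} produces the $\theta=1$ bound $(r\mu)^m\lambda(1+\lambda)^{-1/(D+1)}\exp(-cR(q,y)^\gamma\lambda^{\gamma/(D+1)})$.

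For the $\delta_m - r^m d$ estimate the same technique is applied on the cell $F_w(X)$ with $\phi_{q,m,w}$ piecewise $m$-harmonic on $F_w(X)$, equal to $1$ at $q$ and $0$ on $(V_m\cap F_w(X))\setminus\{q\}$. The cell-level relation $\DF_{F_w}(u,\phi_{q,m,w})=\int_{F_w}(\op u)\phi_{q,m,w}\,d\mu+\sum_{p\in F_w(V_0)}du(p)\phi_{q,m,w}(p)$, obtained by localizing the global form--Laplacian--normal-derivative identity, together with $\phi_{q,m,w}(q)=1$ and vanishing at the other points of $F_w(V_0)$, gives
\[r^{-m}\delta_m u(F_w(q'))-du(F_w(q'))=\int_{F_w(X)}(\op u)\phi_{q,m,w}\,d\mu.\]
Every $m$-cell adjacent to $q$ lies inside some $(m-1)$-cell containing $q$, so the hypothesis on $y$ excludes $y$ from the support of $\phi_{q,m,w}$ and allows us to substitute $\op u=-\lambda u$ inside the integral. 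The same $L^\infty$/support-measure estimate then yields the $\theta=1$ bound $(r\mu)^m\lambda(1+\lambda)^{-1/(D+1)}\exp(-cR(q,y)^\gamma\lambda^{\gamma/(D+1)})$.

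Interpolation is routine: whenever a quantity $A$ satisfies $A\le B_0$ and $A\le B_1$, it also satisfies $A\le B_0^{1-\theta}B_1^\theta$ for $\theta\in[0,1]$. Applying this to the $\op_m$-bound against its trivial counterpart yields the stated first expression. For the $\delta_m - r^m d$ bound we must interpolate the refined estimate against \emph{each} of the two trivial pieces in~\eqref{eqn:trivialdmG}: pairing $(1+\lambda)^{-1/(D+1)}$ with the refined bound produces $(r\mu)^{m\theta}\lambda^\theta(1+\lambda)^{-1/(D+1)}$, while pairing $r^m$ with the refined bound and then using the elementary estimate $\lambda(1+\lambda)^{-1/(D+1)}\le\lambda^{D/(D+1)}$ produces $r^m\mu^{m\theta}\lambda^{\theta D/(D+1)}$. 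The principal technical obstacle is justifying the two energy identities at the required generality: $\DF(u,\psi)=r^{-m}\DF_m(u,\psi)$ for arbitrary $u\in\dom(\DF)$ follows from the decomposition $u=P_m u+(u-P_m u)$, with $P_m u$ the piecewise $m$-harmonic extension of $u|_{V_m}$, using that piecewise $m$-harmonic functions are energy-orthogonal to functions vanishing on $V_m$; the cell-level identity is the localization of the global form--Laplacian relation obtained by summing~\eqref{eqn:locGG} over the $m$-subcells of $F_w(X)$ and matching interior normal derivatives.
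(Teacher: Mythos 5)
Your proposal is correct and follows essentially the same route as the paper: the same piecewise $m$-harmonic test function $\psi_m$ (equal to $1$ at $q$, $0$ on $V_m\setminus\{q\}$), the same exact identities $r^{-m}\op_m G_\lambda(q,y)=-\lambda\int G_\lambda\psi_m\,d\mu$ and $r^{-m}\delta_m G_\lambda-dG_\lambda=-\lambda\int_{\text{cell}}G_\lambda\psi_m\,d\mu$ obtained from the energy/Gauss--Green relations, the same $L^\infty$-times-support-measure estimate giving the $\theta=1$ bound $(r\mu)^m\lambda(1+\lambda)^{-1/(D+1)}\exp(-cR(q,y)^\gamma\lambda^{\gamma/(D+1)})$, and the same geometric interpolation against \eqref{eqn:trivialDmG} and \eqref{eqn:trivialdmG} (your splitting of the two-term trivial bound and the estimate $\lambda^\theta(1+\lambda)^{-\theta/(D+1)}\le\lambda^{\theta D/(D+1)}$ reproduces exactly the two terms in the second display). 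Your extra care in justifying $\DF(u,\psi_m)=r^{-m}\DF_m(u,\psi_m)$ via energy-orthogonality of piecewise harmonic functions, and in flagging the $q\in V_0$ boundary term, only makes explicit what the paper leaves implicit.
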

\begin{proof}
Let $\psi_{m}$ be piecewise harmonic of scale $m$ with value $1$ at $q$ and zero at all other points of $V_{m}$. Then $\op_{m}u=\DF(u,\psi_{m})=\int (\op u)\psi_{m}$ if $u$ is sufficiently regular.  Now with $u=G_{\lambda}$ we have $\op G_{\lambda} = -\lambda G_{\lambda}$ on the support of $\psi_{m}$ by our assumption on $y$.  Thus
\begin{equation*}
	r^{-m}\bigl|\op_{m}G_{\lambda}(q,y)\bigr|
	=\Bigl|\int (-\lambda G_{\lambda}(x,y)) \psi_{m}(x)\, d\mu(x)\Bigr|
	\leq \lambda \mu^{m} (1+\lambda)^{-1/(D+1)} \exp \Bigl( -c R(q,y)^{\gamma} \lambda^{\gamma/(D+1)} \Bigr)
	\end{equation*}
because $|\psi_{m}|\leq 1$ by the maximum principle and $|G_{\lambda}|$ may be estimated using~\eqref{eqn:Rests}.  Note that in the estimate of $|G_{\lambda}|$ we must take the supremum over the support of $\psi_{m}$, but $R(x,y)^{\gamma}\geq cR(q,y)^{\gamma}$ on this set by the triangle inequality and our hypothesis that $y$ is separated from the $m$-cell containing $q$.  The desired estimate comes from the product of the $\theta$ power of this inequality with the $(1-\theta)$ power of~\eqref{eqn:trivialDmG}.

The proof for $\delta_{m}G_{\lambda}$ is almost identical.  Since $q=F_{w}(q')\in V_{n}$, for $m>n$ there is a unique $m$-cell $F_{\tilde{w}}(X)$ contained in $F_{w}(X)$.  Following the same reasoning as for $\op_{m}$ but restricting to $F_{\tilde{w}}(X)$ we find from the local Gauss-Green formula~\eqref{eqn:locGG} that
\begin{equation*}
	r^{-m} \delta_{m}G_{\lambda}(F_{w}(q'),y) - dG_{\lambda}(F_{w}(q'),y)
	= \int_{F_{\tilde{w}}(X)} (-\lambda G_{\lambda}(x,y)) \psi_{m}(x)\, d\mu(x)
	\end{equation*}
from which point we make the same estimate as before, take the $\theta$ power and multiply by the $(1-\theta)$ power of~\eqref{eqn:trivialdmG} to complete the proof.
\end{proof}

\begin{corollary}\label{cor:DdmKsests}
Fix $q\in V_{n}$, $m>n$, $s\in(0,1)$ and $\theta\in[0,1]$.  If $y$ is not in any $(m-1)$-cell containing $q$ then
\begin{align*}
	\bigl|\op_{m}K_{s}(q,y) \bigr|
	&\lesssim  C(s) (r\mu)^{m\theta} R(q,y)^{(s-\theta)(D+1)-D},\\
	\bigl| \delta_{m}K_{s}(F_{w}(q'),y) - r^{m} dK_{s}(F_{w}(q'),y) \bigr|
	&\lesssim C(s)(r\mu)^{m\theta} R(q,y)^{(s-\theta)(D+1)-D}. 
	\end{align*}
\end{corollary}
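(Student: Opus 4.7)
The plan is to apply the integral representation $K_s = \int_0^\infty \lambda^{-s} G_\lambda\,d\lambda$ together with the linearity of $\op_m$ and $\delta_m - r^m d$ to write
\[
\op_m K_s(q,y) = \int_0^\infty \lambda^{-s}\op_m G_\lambda(q,y)\,d\lambda,
\]
with an analogous expression for $\delta_m K_s - r^m dK_s$, then substitute the interpolation bounds of Proposition~\ref{prop:interpests} into the integrand and conclude with the integral estimate~\eqref{eqn:integralest0A} in exactly the manner used for Proposition~\ref{prop:Kest}.

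Concretely, for $\op_m K_s(q,y)$ this gives
\[
|\op_m K_s(q,y)| \lesssim (r\mu)^{m\theta}\int_0^\infty \frac{\lambda^{\theta-s}}{(1+\lambda)^{1/(D+1)}}\exp\bigl(-cR(q,y)^\gamma \lambda^{\gamma/(D+1)}\bigr)\,d\lambda,
\]
and I would apply~\eqref{eqn:integralest0A} with $A=\infty$, $\kappa=cR(q,y)^\gamma$, $b=\gamma/(D+1)$, and $a=\theta-s+D/(D+1)$. As in Proposition~\ref{prop:Kest}, the $D/(D+1)=1-\tfrac{1}{D+1}$ combines the shift from $d\lambda$ to $d\lambda/\lambda$ with the $-1/(D+1)$ supplied by $(1+\lambda)^{-1/(D+1)}$, so that only $1-s>0$ is really needed. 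This produces $\kappa^{-a/b} = R(q,y)^{(s-\theta)(D+1)-D}$, which is the first claimed estimate.

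For $\delta_m K_s - r^m dK_s$, Proposition~\ref{prop:interpests} yields two terms in the integrand. The first, $(r\mu)^{m\theta}\lambda^\theta/(1+\lambda)^{1/(D+1)}$ times the Gaussian, integrates exactly as above to $(r\mu)^{m\theta}R(q,y)^{(s-\theta)(D+1)-D}$. The second term, $r^m \mu^{m\theta}\lambda^{\theta D/(D+1)}$ times the Gaussian, integrates (via~\eqref{eqn:integralest0A} with $a=1-s+\theta D/(D+1)$) to $r^m \mu^{m\theta}R(q,y)^{s(D+1)-\theta D-D-1}$, whose ratio to the target bound $(r\mu)^{m\theta}R(q,y)^{(s-\theta)(D+1)-D}$ is
\[
\Bigl(\frac{r^m}{R(q,y)}\Bigr)^{1-\theta}.
\]
The main—and only substantive—obstacle is absorbing this second-term mismatch, and it is also the only place the geometric hypothesis is used: since $y$ lies outside every $(m-1)$-cell containing $q$, the triangle inequality for a metric comparable to $R^\gamma$ forces $R(q,y)\gtrsim r^{m-1}\simeq r^m$, so $(r^m/R(q,y))^{1-\theta}\lesssim 1$ uniformly for $\theta\in[0,1]$. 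Once this observation is in hand the corollary follows, with the remaining work being the routine power bookkeeping in the two integral calculations.
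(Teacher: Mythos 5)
Your proposal is correct and rests on the same ingredients as the paper's proof: the representation of $\op_{m}K_{s}$ and $\delta_{m}K_{s}-r^{m}dK_{s}$ as integrals of the corresponding resolvent quantities against $\lambda^{-s}$, the interpolated bounds of Proposition~\ref{prop:interpests}, the integral estimate~\eqref{eqn:integralest0A}, and the separation $R(q,y)\gtrsim r^{m}$ coming from the hypothesis that $y$ avoids the $(m-1)$-cells containing $q$. Your treatment of $\op_{m}K_{s}$ (and of the first term in the $\delta_{m}$ bound) is identical to the paper's, including the observation that only $1-s>0$ is really needed because part of the power of $\lambda$ comes from the factor $(1+\lambda)^{-1/(D+1)}$, which mirrors the paper's own remark in Proposition~\ref{prop:Kest}. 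The only divergence is in how the second term of the $\delta_{m}$ estimate is handled: the paper splits the $\lambda$-integral at $A=r^{-m(D+1)}$, notes that the first term of Proposition~\ref{prop:interpests} dominates on $[0,A]$ and the second on $[A,\infty)$, applies~\eqref{eqn:integralestAinfty} on the tail to obtain a bound of the form $(r\mu)^{m\theta}r^{m((s-\theta)(D+1)-D)}\exp\bigl(-c(r^{-m}R(q,y))^{\gamma}\bigr)$, and then uses $R(q,y)\geq C'r^{m}$ to absorb this into the target; you instead integrate each of the two terms over all of $(0,\infty)$, which produces the clean power mismatch $(r^{m}/R(q,y))^{1-\theta}$, and absorb it with the same geometric fact. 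The two routes are equivalent in substance, and yours is slightly more streamlined: it avoids the splitting and~\eqref{eqn:integralestAinfty} altogether, and the quantity to be absorbed is a pure power with nonnegative exponent $1-\theta$, so it is bounded as soon as $R(q,y)\gtrsim r^{m}$, with no need to invoke the residual exponential factor (which the paper's version does implicitly require when the exponent $(s-\theta)(D+1)-D$ is negative). The price is that your intermediate bounds are not matched to the regime in which each term of Proposition~\ref{prop:interpests} actually dominates, but nothing in the argument needs that.
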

\begin{proof}
Integrate the estimates from Proposition~\ref{prop:interpests} against $\lambda^{-s}$ to obtain $\op_{m}K_{s}$ as in~\eqref{eqn:DmGusingLap} and $(\delta_{m}-r^{m}dK_{s})$ as in~\eqref{eqn:dmGusingLap}.  In the first case we may use~\eqref{eqn:integralest0A} with $A=\infty$, $\kappa=R(q,y)^{\gamma}$, $b=\gamma/(D+1)$ and $a=1-s+\theta- \frac{1}{D+1}>0$. Then the integral is bounded by $R(q,y)^{1-(D+1)(1-s+\theta)}$, which combined with the factor $(r\mu)^{m\theta}$ gives the stated bound for $\op_{m}K_{s}$.

In the $\delta_{m}$ case we split the integral over $\lambda\in[0,A]$ and in $[A,\infty)$ with $A=r^{-m(D+1)}$.  Observe that on $[0,A]$ the first term from Proposition~\ref{prop:interpests} dominates and we can use~\eqref{eqn:integralest0A} with $a$,$b$,$\kappa$ as before to obtain the same bound $(r\mu)^{m\theta}R(q,y)^{1-(D+1)(1-s+\theta)}$.  On $[A,\infty)$ the second term dominates and  we use~\eqref{eqn:integralestAinfty} with the same $b$ and $\kappa$ but $a=1-s+\frac{\theta D}{D+1}$, so the integral is bounded by $r^{-m(D+1)(1-s)-mD\theta}\exp\bigl(-c( r^{-m}R(q,y))^{\gamma}\bigr)$. Putting in the powers $r^{m}\mu^{m\theta}=r^{m(1+D\theta)}$ the resulting bound may be written  $(r\mu)^{m\theta}r^{m((s-\theta)(D+1)-D)}\exp\bigl(-c( r^{-m}R(q,y))^{\gamma}\bigr)$.  However $y$ is not in any $(m-1)$-cell containing $q$, so $R(q,y)\geq C'r^{m}$, which makes the exponential a constant and implies  $(r\mu)^{m\theta}r^{m((s-\theta)(D+1)-D)}$ is smaller than $(r\mu)^{m\theta}R(q,y)^{(s-\theta)(D+1)-D}$.
\end{proof}

\begin{proof}[\protect{Proof of Theorem~\ref{thm:holderembed}}]
Fix $p\in(1,\infty]$, $s\in(0,1)$ and $q\in V_{n}$ and suppose $s(D+1)>\frac{D}{p}$. Let $X_{0}$ be the union of the $(m-1)$-cells containing $q$. This contains a disc of radius $cr^m$ around $q$, and for $j\geq 1$ we let $X_{j}=\{x:c2^{j}r^{m}< R(x,y)\leq c2^{j+1}r^{m}\}\setminus X_{0}$ be the part of the  annulus centered at $y$ that is not in $X_{0}$.  Evidently $X=\cup_{j\geq0}X_{j}$.  Break the integration $\op_{m}u(q)=\int \op_{m}K_{s}(q,y)f(y)\, d\mu(y)$ according to the $X_{j}$ and use Minkowski's and H\"{o}lder's inequalities to obtain
\begin{align}
	\bigl\|\op_{m}u (q)\bigr\|_{l^{Q}(V_{m})}
	&= \biggl\| \sum_{j} \int_{X_{j}} f(y) \op_{m}K_{s} (q,y) \, d\mu(y) \biggr\|_{l^{Q}(V_{m})}\notag \\
	&\leq \sum_{j}  \Bigl\| \int_{X_{j}}  f(y) \op_{m}K_{s} (q,y) \, d\mu(y) \Bigr\|_{l^{Q}(V_{m})} \notag\\
	&\leq \sum_{j} \biggl\| \Bigl( \int_{X_{j}} |f(y)|^{p} \, d\mu \bigr)^{1/p} \Bigl( \int_{X_{j}} \bigl| \op_{m}K_{s}(q,y)\bigr|^{p/(p-1)} \, d\mu \Bigr)^{(p-1)/p} \biggr\|_{l^{Q}(V_{m})}. \label{eqn:minkowbdonDeltamu}
	\end{align}

Now on $X_{0}$  we can only bound $\op_{m}G_{\lambda}$ as in~\eqref{eqn:trivialDmG}. Since this is the same bound as for $G_{\lambda}$ the corresponding bound on $\op_{m}K_{s}$ is the same as for $K_{s}$, which by  Proposition~\ref{prop:Kest} is $C(s)R(q,y)^{s(D+1)-D}$.  Applying Lemma~\ref{lem:kerinLp} this power of $R(q,y)$ is in $L^{p/(p-1)}(d\mu(y))$ and
\begin{equation*}
	\biggl( \int_{X_{0}} \bigl| \op_{m}K_{s}(q,y)\bigr|^{p/(p-1)} \, d\mu \biggr)^{(p-1)/p}
	\lesssim C(s)r^{m(s(D+1)-D +\frac{D(p-1)}{p})} 
	= C(s) r^{m( s(D+1)-\frac{D}{p})}.
	\end{equation*}
On $X_{j}$ we are outside the $(m-1)$-cells containing $q$ so  so we use Corollary~\ref{cor:DdmKsests} with $\theta=1$ to see
\begin{equation*}
	\bigl|\op_{m}K_{s}(q,y) \bigr|
	\lesssim C(s) (r\mu)^{m} R(q,y)^{(s-1)(D+1)-D}
	= C(s)  r^{m(D+1)} R(q,y)^{(s-1)(D+1)-D}
	\end{equation*}
Then on $X_{j}$ we have control on $R(q,y)$ and from Ahlfors regularity the measure is at most a multiple of $\bigl( 2^{j}r^{m}\bigr)^{D}$, so that
\begin{equation*}
	\int_{X_{j}} \bigl| \op_{m}K_{s}(q,y)\bigr|^{p/(p-1)} \, d\mu 
	\lesssim C(s) r^{m(D+1)p/(p-1)} \Bigl( 2^{j}r^{m}\Bigr)^{((s-1)(D+1)-D)p/(p-1)} \Bigl( 2^{j+1}r^{m} \Bigr)^{D}.
	\end{equation*}
We summarize these bounds on the $X_{j}$ integrals as
\begin{equation*}
	\biggl( \int_{X_{j}} \bigl| \op_{m}K_{s}(q,y)\bigr|^{p/(p-1)} \, d\mu \biggr)^{(p-1)/p}
	\lesssim r^{m(s(D+1)-D/p)} 2^{j((s-1)(D+1)-D)}
	\end{equation*}
and combine them with~\eqref{eqn:minkowbdonDeltamu} to obtain
\begin{equation}\label{eqn:Deltamu}
	\bigl\|\op_{m}u (q)\bigr\|_{l^{Q}(V_{m})}
	\lesssim r^{m(s(D+1)-D/p)} \sum_{j\geq0} 2^{j((s-1)(D+1)-D/p)} \biggl\| \Bigl( \int_{X_{j}} |f(y)|^{p} \, d\mu \Bigr)^{1/p} \biggr\|_{l^{Q}(V_{m})}.
	\end{equation}

The dependence of $\int_{X_{j}}|f|^{p}$ on $q\in V_{m}$ is through $X_{j}=X_{j}(q)$.  If $Q\leq p$ then H\"{o}lders inequality gives the bound
\begin{equation*}
	\biggl\| \Bigl( \int_{X_{j}} |f(y)|^{p} \, d\mu \Bigr)^{1/p} \biggr\|_{l^{Q}(V_{m})}
	\leq r^{-mD(\frac{1}{Q}-\frac{1}{p})}  \biggl( \sum_{q\in V_{m}} \int_{X} |f(y)|^{p} \mathds{1}_{X_{j}(q)}(y) \, d\mu(y)   \biggr)^{1/p}
	\end{equation*}
because the number of points in $V_{m}$ bounded by the number of $m$-cells, which is at most a multiple of $\mu^{-m}=r^{-mD}$.  To proceed we notice that for fixed $y$ the $q$ such that $y\in X_{j}(q)$ have $R(q,y)\leq c2^{j+1}r^{m}$.  The number of such $q$ is bounded by a multiple of the number of $m$-cells in the corresponding ball around $y$.  Since these cells are disjoint and of measure $\mu^{m}=r^{mD}$ and the ball has measure bounded by a multiple of $\bigl(2^{j}r^{m}\bigr)^{D}$ by Ahlfors regularity, the number of $q$ so $y\in X_{j}(q)$ is bounded by $2^{jD}$.  Thus
\begin{equation*}
	\biggl( \sum_{q\in V_{m}} \int_{X} |f(y)|^{p} \mathds{1}_{X_{j}(q)}(y) \, d\mu(y)   \biggr)^{1/p}
	\lesssim 2^{jD/p} \|f\|_{p}.
	\end{equation*}
Combining this bound for $Q\leq p$ with the fact that the $l^{Q}$ norm is dominated by the $l^{p}$ norm when $Q>p$ we have
\begin{equation*}
	\biggl\| \Bigl( \int_{X_{j}} |f(y)|^{p} \, d\mu \bigr)^{1/p} \biggr\|_{l^{Q}(V_{m})}
	\leq\min \bigl\{1,  r^{-mD(\frac{1}{Q}-\frac{1}{p})}\bigr\} 2^{jD/p} \|f\|_{p}.
	\end{equation*}
We can substitute this into~\eqref{eqn:Deltamu} to see
\begin{align*}
	\bigl\|\op_{m}u (q)\bigr\|_{l^{Q}(V_{m})}
	&\lesssim \|f\|_{p}  r^{m(s(D+1)-D/p)}\min \bigl\{1,  r^{-mD(\frac{1}{Q}-\frac{1}{p})} \bigr\} \sum_{j\geq0} 2^{j((s-1)(D+1)-D/p)}  2^{jD/p}\\
	&= \begin{cases}
	\|f\|_{p}   r^{m(s(D+1)-D/p)} &\text{ if $p<Q\leq\infty$,}\\
	\|f\|_{p}   r^{m(s(D+1)-D/Q)} &\text{ if $1\leq Q\leq p$}
	\end{cases}
	\end{align*}
which is~\eqref{eqn:MainDeltamests}.
\end{proof}

\begin{proof}[\protect{Theorem~\ref{thm:normalderiv}}]
The stated properties of the integral giving $dK_{s}$ were proved in Corollary~\ref{cor:dKdefined} and the same argument as in the previous proof yields the result.
\end{proof}

\section{Failure of the algebra property}\label{Section:algebraprop}
In~\cite{BST} Ben-Bassat,  Strichartz and Teplyaev proved that the square of a function in $\op^{-1}L^{\infty}$ which has non-zero normal derivative at a point of $V_{m}$ is not in $\op^{-1}L^{\infty}$.  The heart of their argument is the fact that if $f$ has non-zero normal derivative at $q\in V_{\ast}$ then $f$ is comparable to a linear function in the resistance metric near $q$, but the property of being in $\op^{-1}L^{\infty}$ implies the difference $\op_{m}$ is smaller than the square of the resistance.  It is apparent from the results of~\cite{BST} that this argument can be generalized to some other \Wsp, though their methods only work for $s=1$.   The following generalizes their main argument to \Wsp\ for a much larger collection of $s$ and $p$.
\begin{theorem}\label{thm:nosquares}
Let $p\in(1,\infty]$ and $s\in(0,1)$ such that $s(D+1)>\frac{D}{p}+2$.  Suppose $u\in\Wsp$ and there is $q=F_{w}(q')\in V_{\ast}$, $q'\in V_{0}$ at which $du(F_{w}(q'))\neq0$.  Then $u^{2}\not\in\Wsp$.
\end{theorem}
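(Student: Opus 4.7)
The plan is to combine a simple algebraic identity for $\op_m$ applied to a square with the pointwise estimates of Theorems~\ref{thm:holderembed} and~\ref{thm:normalderiv}. Expanding $u^2(q)-u^2(x)=2u(q)(u(q)-u(x))-(u(q)-u(x))^2$ and summing over $x\sim_m q$ yields the key identity
\begin{equation*}
\op_m(u^2)(q)=2u(q)\,\op_m u(q)-\sum_{x\sim_m q}\bigl(u(q)-u(x)\bigr)^2.
\end{equation*}
I will assume for contradiction that $u^2\in\Wsp$ and derive incompatible upper and lower bounds on the sum-of-squares term on the right.

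For the upper bound, I would apply Theorem~\ref{thm:holderembed} with $Q=\infty$ separately to $u$ and to $u^2$, which is permitted because both belong to $\Wsp$ and $s(D+1)>D/p$. Taking the $l^\infty(V_m)$ bound at the point $q$ gives
\begin{equation*}
|\op_m u(q)|+|\op_m(u^2)(q)|\lesssim r^{m(s(D+1)-D/p)}.
\end{equation*}
Corollary~\ref{cor:bddness} provides $u\in L^\infty$, so $2u(q)\op_m u(q)$ obeys the same bound. Rearranging the identity yields
\begin{equation*}
\sum_{x\sim_m q}\bigl(u(q)-u(x)\bigr)^2\lesssim r^{m(s(D+1)-D/p)}.
\end{equation*}

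For a matching lower bound I would use the hypothesis $du(F_w(q'))\neq 0$. Since $s(D+1)>D/p+1$, Theorem~\ref{thm:normalderiv} with $Q=\infty$ applies at $q=F_w(q')$ and gives
\begin{equation*}
\delta_m u(F_w(q'))=r^m\, du(F_w(q'))+O\bigl(r^{m(s(D+1)-D/p)}\bigr).
\end{equation*}
Because $s(D+1)-D/p>2>1$, the error is $o(r^m)$, and so $|\delta_m u(F_w(q'))|\gtrsim r^m$ for $m$ large. The quantity $\delta_m u(F_w(q'))$ is by definition the sum of $u(q)-u(x)$ over a bounded number $N_0$ of neighbors $x\sim_m q$ lying in $F_w(X)$, the bound $N_0$ depending only on the fractal and the vertex $q'\in V_0$. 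Cauchy--Schwarz then yields
\begin{equation*}
\sum_{x\sim_m q}\bigl(u(q)-u(x)\bigr)^2\geq\frac{1}{N_0}\bigl|\delta_m u(F_w(q'))\bigr|^2\gtrsim r^{2m}.
\end{equation*}

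Comparing the two estimates forces $r^{2m}\lesssim r^{m(s(D+1)-D/p)}$ for all sufficiently large $m$, which requires $s(D+1)-D/p\leq 2$ and contradicts the hypothesis. The most delicate point is ensuring the gap between the two estimates is genuine: the ``$+1$'' in $s(D+1)>D/p+1$ is what guarantees that Theorem~\ref{thm:normalderiv} actually produces the normal derivative, while the extra ``$+1$'' is precisely what makes the error in that theorem smaller than $r^m$ and thus preserves the $r^{2m}$ lower bound after squaring. Everything else is bookkeeping around the identity and invoking previously established theorems.
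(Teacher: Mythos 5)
Your proposal is correct and follows essentially the same route as the paper's own proof: the same algebraic identity for $\op_{m}$ of a square (the paper phrases it via $v=u-u(q)$), the $Q=\infty$ case of Theorem~\ref{thm:holderembed} applied to $u$ and $u^{2}$, Cauchy--Schwarz to pass from $\delta_{m}u$ to the sum of squares, and the $Q=\infty$ case of Theorem~\ref{thm:normalderiv} to bring in the nonvanishing normal derivative. The only difference is organizational: the paper concludes $du(F_{w}(q'))=0$ directly, whereas you extract the lower bound $|\delta_{m}u|\gtrsim r^{m}$ first and contradict the exponent inequality, which is the same argument run in the opposite order.
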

\begin{proof}
Let $v(x)=u(x)-u(q)$. Write $\op_{m}u^{2}(q)=\op_{m} v^{2}(q) + 2u(q)\op_{m}v(q)$, and observe that $\op_{m}v(q)=\op_{m}u(q)$ because the functions differ only by a constant.  Since $u\in\Wsp$, Theorem~\ref{thm:holderembed} implies there is $C$ so $|\op_{m}u(q)|\leq C r^{m(s(D+1)-D/p)}$.  If $u^{2}\in\Wsp$ then the same estimate would hold for $|\op_{m}u^{2}(q)|$ and therefore for $|\op_{m}v^{2}(q)|$.  However $v(q)=0$, so 
\begin{equation*}
	\bigl(\delta_{m}u(q)\bigr)^{2}
	\leq L \sum_{\substack{x\sim_{m}q\\x\in F_{w}(X)}} (u(q)-u(x))^{2}
	\leq L \sum_{x\sim_{m}q} v(x)^{2}
	= \sum_{x\sim_{m}q}(v^{2}(x)-v^{2}(q))
	= \bigl|\op_{m}v^{2}(q)\bigr|
	\end{equation*}
and we conclude $|\delta_{m}u(q)|^{2}\leq C r^{m(s(D+1)-D/p)}$.  Since $s(D+1)-\frac{D}{p}>2$ we may compare with the $Q=\infty$ case of Theorem~\ref{thm:normalderiv} to find $du(F_{w}(q'))=0$
\end{proof}

The proof of the preceding theorem generalizes easily to show that composing an element of \Wsp\ that has a non-vanishing normal derivative with a function that is convex (or concave) with a H\"{o}lder estimate on the convexity produces functions that cannot be in \Wsp.
\begin{corollary}\label{cor:noconvexfns}
Let $p\in(1,\infty]$, $s\in(0,1)$ and $s(D+1)-\frac{D}{p}>1$.  Suppose $u\in\Wsp$ and there is $q=F_{w}(q')\in V_{\ast}$, $q'\in V_{0}$ at which $du(F_{w}(q'))\neq0$. If $\Phi$ is a function with bounded derivative and which satisfies the following convexity condition at $u(q)$:  there is $1\leq\xi<s(D+1)-\frac{D}{p}$ and $C>0$ such that 
\begin{equation*}
	\Phi(y)-\Phi(u(q)) -\Phi'(u(q))(y-u(q)) \geq C |y-u(q)|^{\xi}.
	\end{equation*}
then $\Phi\circ u\not\in\Wsp$.
\end{corollary}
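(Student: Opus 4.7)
The plan is to mimic the proof of Theorem~\ref{thm:nosquares}, but with the excess
\[
\Psi(y) = \Phi(y) - \Phi(u(q)) - \Phi'(u(q))(y - u(q))
\]
playing the role of the squaring map. Setting $v = u - u(q)$, the decomposition $\Phi\circ u = \Phi(u(q)) + \Phi'(u(q)) v + \Psi\circ u$ together with linearity of $\op_m$, the fact that it annihilates constants, and $\op_m v(q) = \op_m u(q)$ gives
\[
\op_m(\Phi\circ u)(q) = \Phi'(u(q))\,\op_m u(q) + \op_m(\Psi\circ u)(q).
\]
Since $\Phi'$ is bounded, Theorem~\ref{thm:holderembed} ($Q=\infty$) applied to $u$ controls the first term by a multiple of $r^{m(s(D+1)-D/p)}$. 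Assuming for contradiction that $\Phi\circ u \in \Wsp$, the same theorem applied to $\Phi\circ u$ bounds $|\op_m(\Phi\circ u)(q)|$ by the same rate, so
\[
|\op_m(\Psi\circ u)(q)| \lesssim r^{m(s(D+1)-D/p)}.
\]

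Next I would extract a lower bound on $|\op_m(\Psi\circ u)(q)|$ from the convexity hypothesis. Since $\Psi(u(q)) = 0$ and $\Psi(u(x)) \geq C|v(x)|^\xi \geq 0$,
\[
|\op_m(\Psi\circ u)(q)| = \sum_{x\sim_m q} \Psi(u(x)) \geq C \sum_{x\sim_m q}|v(x)|^\xi.
\]
On the other hand $|\delta_m u(F_w(q'))| \leq \sum_{x\sim_m q,\, x\in F_w(X)} |v(x)|$, so the power-mean inequality (valid because $\xi\geq 1$) yields $|\delta_m u(F_w(q'))|^\xi \leq L^{\xi-1}\sum_{x\sim_m q} |v(x)|^\xi$, with $L$ a uniform bound on the number of $m$-neighbors of $q$. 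Combining these,
\[
|\delta_m u(F_w(q'))| \lesssim r^{m(s(D+1)-D/p)/\xi}.
\]

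To close the argument I would invoke the $Q=\infty$ case of Theorem~\ref{thm:normalderiv} (whose hypothesis $s(D+1) > D/p + 1$ is precisely what we have assumed), giving
\[
r^m\,|du(F_w(q'))| \leq |\delta_m u(F_w(q'))| + C r^{m(s(D+1)-D/p)}.
\]
Dividing by $r^m$ and letting $m\to\infty$, both right-hand contributions vanish provided the exponents $s(D+1)/\xi - D/(p\xi) - 1$ and $s(D+1) - D/p - 1$ are strictly positive; these are exactly the assumptions $\xi < s(D+1)-D/p$ and $s(D+1) - D/p > 1$. This forces $du(F_w(q')) = 0$, contradicting the hypothesis. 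The main subtlety lies in the dual role of $\xi$: it must be at least $1$ for the power-mean step to apply, and strictly below $s(D+1) - D/p$ so that the decay rate $r^{m(s(D+1)-D/p)/\xi}$ still outpaces the $r^m$ factor attached to the normal derivative.
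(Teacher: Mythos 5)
Your proposal is correct and follows essentially the same route as the paper's proof: the power-mean inequality (using $\xi\geq1$) to bound $|\delta_{m}u(F_{w}(q'))|^{\xi}$ by $\sum_{x\sim_{m}q}|u(x)-u(q)|^{\xi}$, the convexity hypothesis to dominate that sum by $|\op_{m}(\Phi\circ u)(q)|+|\Phi'(u(q))||\op_{m}u(q)|$, Theorem~\ref{thm:holderembed} to bound both terms by $r^{m(s(D+1)-D/p)}$, and Theorem~\ref{thm:normalderiv} with $\xi<s(D+1)-\frac{D}{p}$ to force $du(F_{w}(q'))=0$, a contradiction. Your explicit introduction of the excess $\Psi$ and the spelled-out final limit are only cosmetic reorganizations of the paper's chained inequalities.
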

\begin{proof}
By H\"{o}lder's inequality and the assumption on $\Phi$
\begin{align*}
	|\delta_{m}u(F_{w}(q'))|^{\xi}
	&\leq L^{\xi-1} \sum_{\substack{x\sim_{m}q\\x\in F_{w}(X)}} |u(x)-u(q)|^{\xi} \leq \sum_{x\sim_{m}q}  |u(x)-u(q)|^{\xi}\\
	&\leq \frac{1}{C} \sum_{x\sim_{m}q}\bigl(  \Phi(u(x))-\Phi(u(q)) -\Phi'(u(q))(u(x)-u(q)) \bigr)\\
	&\leq \frac{1}{C} \bigl| \op_{m}(\Phi\circ u)(q)\bigr| + \frac{\Phi'(u(q))}{C}\bigl| \op_{m}u(q)\bigr|.
	\end{align*}
If $\Phi\circ u\in\Wsp$ then both terms on the right are bounded by a multiple of $mr^{m(S(D+1)-\frac{D}{p})}$. From our assumption on $\xi$ we conclude that $\delta_{m}u(F_{w}(q'))=o(r^{m})$ and thus $du(F_{w}(q'))=0$.
\end{proof}

The preceding results are only interesting when we know something more about functions whose normal derivatives vanish on $V_{m}$.  Fortunately we can obtain this from the $Q=2$ case of Theorem~\ref{thm:normalderiv} using the following result.
\begin{proposition}\label{prop:DFasdeltam}
If $u$ is a function on $X$ for which $\bigl\| \delta_{m}u(x) \bigr\|_{l^{2}(V_{m})}=o(r^{m/2})$ then $u$ is constant.  If, in addition, $u\in\Wsp$ then $u\equiv0$. 
\end{proposition}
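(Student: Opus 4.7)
The plan is to turn the hypothesis on $\delta_m u$ into vanishing of the scaled energies $r^{-m}\DF_m(u)$, and then exploit the monotone convergence $r^{-m}\DF_m(u) \nearrow \DF(u)$ built into the construction of the resistance form. For each word $w$ with $|w|=m$ and each $q' \in V_0$, the scale-$0$ edge relation is complete, so
\[
\delta_m u(F_w(q')) = \sum_{q'' \in V_0,\, q'' \neq q'} \bigl(u(F_w(q')) - u(F_w(q''))\bigr),
\]
which I would recognize as the graph Laplacian of $u \circ F_w$ on the finite complete graph $(V_0,\sim_0)$, after identifying $F_w(V_0)$ with $V_0$ via $F_w^{-1}$.

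Any graph Laplacian $L$ on a finite connected graph obeys the reverse Poincar\'e inequality $\langle L\phi,L\phi\rangle \geq \mu_1 \langle L\phi,\phi\rangle = \mu_1\,\DF_0(\phi)$, with $\mu_1>0$ the spectral gap. Applied cell by cell, and using the fact that each $\sim_m$-edge lies in exactly one $m$-cell, summation over $|w|=m$ yields the key estimate
\[
\bigl\| \delta_m u \bigr\|_{l^2(V_m)}^2 \;\geq\; \mu_1\, \DF_m(u).
\]
The hypothesis $\|\delta_m u\|_{l^2(V_m)} = o(r^{m/2})$ then forces $r^{-m}\DF_m(u) \to 0$. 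Since this sequence is non-decreasing in $m$ by the assumed properties of the resistance form, every term must already vanish; in particular $\DF(u) = \lim_m r^{-m}\DF_m(u) = 0$, and the H\"older bound $|u(x)-u(y)|^2 \leq \DF(u) R(x,y)$ forces $u$ to be constant on $X$.

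For the second statement, suppose additionally that $u = \op^{-s}f$ with $f\in L^p$. Dirichlet boundary conditions cause the heat kernel $h_t(x,y)$ to vanish whenever either argument lies in $V_0$; the same therefore holds for the Riesz kernel $K_s(x,y) = C_s\int_0^\infty t^{s-1} h_t(x,y)\,dt$, and hence $u(x) = \int K_s(x,y) f(y)\, d\mu(y)$ vanishes on $V_0$. A constant function vanishing on $V_0$ must be identically zero.

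I expect the main technical obstacle is the first step, where an $l^2$-norm bound on the cell-Laplacian quantities $\delta_m u$ is converted into a lower bound on the cell energy by invoking the spectral gap of the complete graph on $V_0$. Once this reverse Poincar\'e-type inequality is in place, the monotonicity built into the resistance-form axioms does the rest.
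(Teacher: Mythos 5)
Your proof is correct and is essentially the paper's own argument: a cell-by-cell conversion of the $l^{2}$ hypothesis on $\delta_{m}u$ into the bound $\DF_{m}(u)\lesssim \|\delta_{m}u\|_{l^{2}(V_{m})}^{2}$, followed by monotonicity of $r^{-m}\DF_{m}(u)$ to force $\DF(u)=0$, the resistance H\"{o}lder estimate to get constancy, and the Dirichlet boundary condition (vanishing of $u$ on $V_{0}$) to conclude $u\equiv0$. The only difference is presentational: you obtain the cell estimate from the spectral gap of the complete-graph Laplacian on $V_{0}$ (where in fact $\langle L\phi,L\phi\rangle=|V_{0}|\langle L\phi,\phi\rangle$ exactly, so your reverse Poincar\'{e} inequality is an identity), whereas the paper uses the equivalent explicit relation $\delta_{m}u(x)-\delta_{m}u(y)=|V_{0}|\bigl(u(x)-u(y)\bigr)$ for $x,y$ in the same cell.
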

\begin{proof}
Recall that the Dirichlet form was obtained as
\begin{equation*}
	\DF(u)
	=\lim_{m\to\infty}r^{-m}\sum_{x\sim_{m}y}(u(x)-u(y))^{2}
	=\lim_{m\to\infty}r^{-m}\sum_{\{w:|w|=m\}} \sum_{x,y\in F_{w}(V_{0})} (u(x)-u(y))^{2}
	\end{equation*}
where we have re-written the sum is over all edges of the $m$-scale graph as a sum over cells using that $x\sim_{m}y \iff x,y\in F_{w}(V_{0})$ for some $w$ with $|w|=m$. Now at $x\in F_{w}(V_{0})$ we have from~\eqref{eqn:defnofdeltam}
\begin{equation*}
	\delta_{m} u(x) = \sum_{z\in F_{w}(V_{0})} (u(x)-u(z)) = |V_{0}| u(x) - \sum_{z\in F_{w}(V_{0})} u(z)
	\end{equation*}
where $|V_{0}|$ denotes the number of points in $V_{0}$.  Hence $\delta_{m}u(x)-\delta_{m}u(y)=|V_{0}|(u(x)-u(y))$, and therefore
\begin{align*}
	\DF(u)
	&=|V_{0}|^{-1} \lim_{m\to\infty}r^{-m}\sum_{\{w:|w|=m\}} \sum_{x,y\in F_{w}(V_{0})} (\delta_{m}u(x)-\delta_{m}u(y))^{2}\\
	&\leq2 |V_{0}|^{-1} \lim_{m\to\infty}r^{-m}\sum_{\{w:|w|=m\}} \sum_{x,y\in F_{w}(V_{0})} \bigl( |\delta_{m}u(x)|^{2}+ |\delta_{m}u(y)|^{2}\bigr)\\
	&= 4 |V_{0}|^{-1} \lim_{m\to\infty}r^{-m} \bigl\| \delta_{m}u(x) \bigr\|_{l^{2}(V_{m})}^{2}.
	\end{align*}
From our hypotheses we now find $\DF(u)=0$, whereupon $u$ is constant.  If also $u\in\Wsp$ then $u=0$ on $V_{0}$, so $u\equiv0$.
\end{proof}

\begin{corollary}
Suppose $p\in(1,\infty]$ and $s\in(1/2,1)$ with $s(D+1)-\frac{D}{p}>1$.  If $u\in\Wsp$ has $du(F_{w}(q))=0$ for all finite words $w$ and all $q\in V_{0}$ then $u\equiv0$.
\end{corollary}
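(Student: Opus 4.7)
The plan is to derive the hypothesis of Proposition~\ref{prop:DFasdeltam} from Theorem~\ref{thm:normalderiv} applied with $Q=2$. Write $u=\op^{-s}f$ with $f\in L^{p}$. The standing assumption $s(D+1)-\tfrac{D}{p}>1$ is exactly the hypothesis $s(D+1)>\tfrac{D}{p}+1$ required for Theorem~\ref{thm:normalderiv}, and by Corollary~\ref{cor:dKdefined} the integral representation $du(F_{w}(q'))=\int dK_{s}(F_{w}(q'),y)f(y)\,d\mu(y)$ is valid.

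Under the assumption that every $du(F_{w}(q'))$ vanishes, the subtracted integral term in Theorem~\ref{thm:normalderiv} is identically zero, so for each $m$ the estimate reduces to
\begin{equation*}
	\bigl\|\delta_{m}u(F_{w}(\cdot))\bigr\|_{l^{2}(V_{m})}
	\lesssim
	\begin{cases}
	C(s)\,r^{m(s(D+1)-D/2)}\|f\|_{p} & \text{if } 2\leq p,\\
	C(s)\,r^{m(s(D+1)-D/p)}\|f\|_{p} & \text{if } 1<p<2.
	\end{cases}
\end{equation*}
In the first case the exponent exceeds $m/2$ precisely because $s>\tfrac{1}{2}$ gives $s(D+1)-\tfrac{D}{2}>\tfrac{1}{2}$; in the second case the exponent exceeds $m/2$ because our hypothesis supplies $s(D+1)-\tfrac{D}{p}>1>\tfrac{1}{2}$. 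In either regime we obtain $\|\delta_{m}u\|_{l^{2}(V_{m})}=O(r^{m\eta})$ for some $\eta>\tfrac{1}{2}$, hence certainly $o(r^{m/2})$.

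Proposition~\ref{prop:DFasdeltam} then applies: the decay condition forces $\DF(u)=0$, so $u$ is constant, and since $u\in\Wsp$ vanishes on $V_{0}$ (these are Dirichlet boundary values built into $\op^{-s}$), the constant must be zero.

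The proof is essentially an index-checking exercise: the only thing to verify is that the decay rate from Theorem~\ref{thm:normalderiv} at $Q=2$ beats $r^{m/2}$ in both the $p\geq 2$ and $p<2$ branches. The numerical thresholds $s>\tfrac{1}{2}$ and $s(D+1)-\tfrac{D}{p}>1$ in the statement are exactly calibrated to make this work, so there is no substantive obstacle beyond matching the two regimes to the two branches of Theorem~\ref{thm:normalderiv}.
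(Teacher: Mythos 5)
Your proposal is correct and follows essentially the same route as the paper: apply Theorem~\ref{thm:normalderiv} with $Q=2$, note that vanishing normal derivatives kill the integral term so that $\|\delta_m u\|_{l^2(V_m)}$ decays like $r^{m\eta}$ with $\eta = s(D+1)-D/2 > \tfrac12$ (from $s>\tfrac12$) when $p\geq 2$ and $\eta = s(D+1)-D/p > 1$ when $p<2$, and then invoke Proposition~\ref{prop:DFasdeltam}. The only cosmetic difference is that the paper separately notes an extra factor of $m$ in the $p=\infty$ estimate, which is harmless since $mr^{m\eta}$ is still $o(r^{m/2})$ for $\eta>\tfrac12$.
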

\begin{proof}
The assumption $s(D+1)-\frac{D}{p}>1$ is made to ensure the normal derivative $dK_s$ from Theorem~\ref{thm:normalderiv} is integrable against $f$.  Using the $Q=2$ estimate from that result we see that if $p\in(1,2]$ then $\bigl\| \delta_{m}u(x) \bigr\|_{l^{2}(V_{m})}=O(r^{m(s(D+1)-D/p)})=o(r^{m})$, so from  Proposition~\ref{prop:DFasdeltam} we get $u\equiv0$.  The corresponding estimate when $p\in(2,\infty)$ is that $\bigl\| \delta_{m}u(x) \bigr\|_{l^{2}(V_{m})}=O(r^{m(s(D+1)-D/2)})$, and for $p=\infty$ is the same but with an extra factor of $m$.  In either case we can apply Proposition~\ref{prop:DFasdeltam} to get $u\equiv0$ because $s(D+1)-\frac{D}{2}>\frac{1}{2}$ is simply $s>\frac{1}{2}$.
\end{proof}

\begin{corollary}
If  $p\in(1,\infty]$ and $s\in(1/2,1)$ with $s(D+1)-\frac{D}{p}>2$ then \Wsp\ is not an algebra.
\end{corollary}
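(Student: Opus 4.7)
The plan is to combine the two preceding results in a nearly immediate way. The hypotheses $s \in (1/2,1)$ and $s(D+1) - \frac{D}{p} > 2$ are exactly what is needed to invoke both Theorem~\ref{thm:nosquares} (which requires $s(D+1) > \frac{D}{p} + 2$) and the preceding corollary (which requires $s \in (1/2,1)$ and $s(D+1) - \frac{D}{p} > 1$, implied by the stronger condition here).

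First I would produce a non-zero element of $\Wsp$. Since the Dirichlet Laplacian $\op$ has least eigenvalue $\lambda_1 > 0$, it is injective, so $\op^{-s}$ is injective on $L^p$; picking any non-zero $f \in L^p$ and setting $u = \op^{-s} f$ yields a non-zero $u \in \Wsp$. Next, applying the contrapositive of the previous corollary to this $u$, the non-vanishing of $u$ forces the existence of some word $w$ and some $q' \in V_0$ such that the normal derivative $du(F_w(q'))$ is non-zero. Finally, Theorem~\ref{thm:nosquares} applies (since $s(D+1) > \frac{D}{p} + 2$), giving $u^2 \notin \Wsp$. Thus $u \in \Wsp$ but $u^2 \notin \Wsp$, so the space is not an algebra under the pointwise product.

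There is essentially no obstacle here beyond checking that the hypotheses of the two inputs are satisfied and that a non-zero element of $\Wsp$ exists; the real work was in establishing Theorem~\ref{thm:nosquares} and the previous corollary. The only subtlety worth mentioning explicitly in the written proof is the injectivity of $\op^{-s}$, which is a consequence of the spectral gap $\lambda_1 > 0$ noted in Section~\ref{sec:Fractalsandresolvent}.
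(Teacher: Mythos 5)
Your proof is correct and takes essentially the same route as the paper: both arguments amount to combining Theorem~\ref{thm:nosquares} with the preceding corollary (you run the implications in contrapositive order), the only difference being how a non-zero element of \Wsp\ is exhibited---you use injectivity of $\op^{-s}$, while the paper cites the smooth bump functions of~\cite{RST}. One small caution: injectivity of $\op^{-s}$ on all of $L^{p}$ for $p<2$ does not follow immediately from the spectral gap (which gives injectivity on $L^{2}$, hence on $L^{p}\subset L^{2}$ only when $p\geq 2$), but your argument needs just one non-zero element of \Wsp, and this is obtained by taking any non-zero bounded $f$, so that $f\in L^{2}\cap L^{p}$ and $\op^{-s}f\neq 0$ by the $L^{2}$ spectral theorem.
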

\begin{proof}
Applying Theorem~\ref{thm:nosquares} we find that any function in \Wsp\ with square in \Wsp\ has vanishing normal derivative on $V_{m}$ for all $m$, so by the previous corollary it is identically zero.  However \Wsp\ contains many non-zero functions.  For example, by results of~\cite{RST}, for any compact $K\subset X$ and open neighborhood $U\supset K$ there is  a smooth $u$ which is $1$ on $K$, $0$ outside $U$. In particular this $u$ has continuous $\op u$ so is in \Wsp.
\end{proof}
Similarly, but using Corollary~\ref{cor:noconvexfns} instead of Theorem~\ref{thm:nosquares} we have
\begin{corollary}
If  $p\in(1,\infty]$ and $s\in(1/2,1)$ with $s(D+1)-\frac{D}{p}>\xi\geq1$ then \Wsp\ is not closed under the action of $\Phi$ as in Corollary~\ref{cor:noconvexfns}.
\end{corollary}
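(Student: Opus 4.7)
The plan is to follow the same contradiction template as the preceding corollary, but with the squaring map replaced by composition with $\Phi$. Suppose for contradiction that $\Wsp$ were closed under $u\mapsto\Phi\circ u$. Pick any $u\in\Wsp$. Then $\Phi\circ u\in\Wsp$ as well, so the contrapositive of Corollary~\ref{cor:noconvexfns} forces $du(F_w(q'))=0$ for every finite word $w$ and every $q'\in V_0$. This is exactly the moment where the inequality $s(D+1)-\tfrac{D}{p}>\xi\geq1$ gets used: it is precisely the hypothesis needed so that Corollary~\ref{cor:noconvexfns} applies to the given convexity exponent $\xi$.

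Next I would invoke the previous corollary on vanishing normal derivatives. Since $s\in(1/2,1)$ and $s(D+1)-\tfrac{D}{p}>\xi\geq1$, its hypotheses hold, and the conclusion is that $u\equiv0$. Thus closure of $\Wsp$ under $\Phi$ would imply that $\Wsp$ contains only the zero function.

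Finally, I would contradict this by producing a non-trivial element of $\Wsp$. As in the preceding corollary, one appeals to the construction from~\cite{RST}: given any compact $K\subset X$ and open neighborhood $U\supset K$, there exists a smooth $u$ equal to $1$ on $K$ and $0$ outside $U$, and such a $u$ has continuous $\op u$ (so lies in $\Wsp$ for every $s<1$ and every $p\in(1,\infty]$) and is plainly not identically zero. This contradicts $u\equiv0$ and completes the argument.

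I do not expect a real obstacle here: all the analytic work has been done in Corollary~\ref{cor:noconvexfns}, Proposition~\ref{prop:DFasdeltam}, and the previous corollary. The only thing to check is bookkeeping of the hypotheses on $(s,p,\xi)$ to ensure both corollaries can be applied simultaneously, which is immediate from the chain of inequalities $s(D+1)-\tfrac{D}{p}>\xi\geq1$ together with $s>1/2$.
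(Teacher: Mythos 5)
Your proposal is correct and follows exactly the paper's intended argument: the paper proves this corollary by remarking that one repeats the proof of the preceding (algebra) corollary with Corollary~\ref{cor:noconvexfns} in place of Theorem~\ref{thm:nosquares}, which is precisely your chain of contrapositive, the vanishing-normal-derivative corollary, and the non-trivial bump function from~\cite{RST}. Your bookkeeping of the hypotheses $s\in(1/2,1)$ and $s(D+1)-\tfrac{D}{p}>\xi\geq1$ matches what both cited results require.
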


\section{Specific fractal examples}\label{Section:Examples}

Our arguments are applicable to the classical Sierpinski Gasket, \SG, which is the unique non-empty compact fixed set of the iterated function system $\{F_{j}=\frac{1}{2}(x+p_{j})\}_{j=0,1,2}$ where the points $p_{j}$ are vertices of an equilateral triangle in $\mathbb{R}^{2}$.  This fractal is very well-studied (see for example~\cite{bobsbook}) and has $r=\frac{3}{5}$ and $\mu=\frac{1}{3}$.  The upper heat kernel estimates (originally from~\cite{BP}) and resolvent kernel estimates (for $\lambda>0$) are as in Section~\ref{sec:Fractalsandresolvent} with $\gamma=\frac{\log 2}{\log(5/3)}$ and $D=\frac{\log3}{\log(5/3)}$.  Note that then $R(x,y)^{\gamma}$ is comparable to the Euclidean path metric on the fractal.  The case $s=1$, $p=\infty$ of the following theorem was proved in~\cite{BST}.

\begin{theorem}
On the Sierpinski Gasket, \Wsp\ is not an algebra if $p\in(1,\infty]$ and $s\in(1/2,1)$ with $s\log 5-\frac{1}{p}\log3>2\log (5/3)$.
\end{theorem}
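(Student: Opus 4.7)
The plan is to deduce this directly from the preceding corollary (the one stating \Wsp\ is not an algebra whenever $p\in(1,\infty]$, $s\in(1/2,1)$ and $s(D+1)-\frac{D}{p}>2$), after verifying that the Sierpinski Gasket falls into the framework of Section~\ref{sec:Fractalsandresolvent} and translating the abstract condition on $(s,p,D)$ into the explicit condition on $(s,p)$ stated in the theorem.

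First I would check that the general machinery applies to \SG. The existence of the self-similar resistance form $\DF$ on $\SG$ with renormalization constant $r=3/5$, together with the self-similar probability measure $\mu$ with cell ratio $\mu=1/3$, is classical (see~\cite{bobsbook}). With these choices, the Hausdorff/Minkowski dimension in the resistance metric is $D=\log 3/\log(5/3)$, and the heat kernel upper bound~\eqref{eqn:HKests} holds with walk dimension exponent $\beta=D+1=\log 5/\log(5/3)$ and $\gamma=\log 2/\log(5/3)$ (so that $R(x,y)^{\gamma}$ is comparable to the Euclidean path metric on \SG); the upper Gaussian estimate is the content of~\cite{BP}. All structural hypotheses in Section~\ref{sec:Fractalsandresolvent}, including the existence of the resolvent kernel $G_\lambda$ and of the normal derivatives used in~\eqref{eqn:d}, thus apply to \SG.

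Next I would do the purely arithmetic step of translating the hypothesis. A direct computation gives
\begin{equation*}
	D+1=\frac{\log 3}{\log(5/3)}+1=\frac{\log 3+\log(5/3)}{\log(5/3)}=\frac{\log 5}{\log(5/3)},
	\qquad
	\frac{D}{p}=\frac{\log 3}{p\log(5/3)}.
	\end{equation*}
Multiplying through by the positive quantity $\log(5/3)$, the condition $s(D+1)-\frac{D}{p}>2$ is equivalent to $s\log 5-\frac{1}{p}\log 3>2\log(5/3)$, which is precisely the stated hypothesis.

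Finally, under the assumption $s\in(1/2,1)$ the hypothesis $s(D+1)-D/p>2$ from the preceding corollary is met, so \Wsp\ is not an algebra on \SG. There is essentially no obstacle here beyond the algebraic rearrangement; the only subtlety worth remarking on is to confirm that \SG\ genuinely satisfies the heat kernel bound~\eqref{eqn:HKests} with the specific values of $D$ and $\gamma$ above, so that the constants implicit in the $\lesssim$ notation (depending on $r,\mu,\gamma,\lambda_1,C_H,c_H$) are legitimately absorbed. Once this is noted the result follows immediately.
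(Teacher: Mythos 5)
Your proposal is correct and is essentially the paper's own argument: the paper states this theorem without a separate proof precisely because, having recorded that \SG\ satisfies the Section~\ref{sec:Fractalsandresolvent} assumptions with $r=3/5$, $\mu=1/3$, $D=\log 3/\log(5/3)$ and $\gamma=\log 2/\log(5/3)$, the result is immediate from the corollary requiring $s(D+1)-\frac{D}{p}>2$, and your identity $D+1=\log 5/\log(5/3)$ is exactly the translation into the stated inequality. Nothing is missing.
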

\begin{remark}
Using  Corollary~\ref{cor:bddness} we see that in the language of Theorem~\ref{thm:mainresult} neither $W^{\alpha,p}$ nor $\dot{W}^{\alpha,p}\cap L^\infty$ are algebras on \SG\ for when
\begin{equation*}
	\max\biggl\{1, \frac{4\log(5/3)}{\log5} + \frac{2\log3}{\log5} \frac{1}{p} \biggr\} <\alpha<2.
	\end{equation*}
 Note that this interval is non-empty if $p>\frac{\log3}{2\log3-\log5}$.
\end{remark}
Our approach also works on a generalization of the Vicsek set.  Following the notation of Barlow in~\cite{B} we work in $\mathbb{R}^{N}$, $N\geq2$ and let $L\geq1$ be an integer.  Let $X_{0}=[0,1]^{N}$ be the unit cube, $V_{0}=\{q_{i}\}_{i=1}^{2^N}$ be its vertices and $x_{0}=(\frac{1}{2},\dotsc,\frac{1}{2})$ its center.  By dividing each axial direction into $L+1$ equal pieces subdivide $X_{0}$ into cubes and let $X_{1}$ be the union of the $2^{N}L+1$ cubes with centers on the lines from $x_{0}$ to each of the $q_{i}$.  Let $\{F_{j}\}_{j=1}^{2^{N}L+1}$ be the orientation preserving linear maps from $X_{0}$ to each cube in $X_{1}$ and let \VLN\  be the fixed set of the resulting iterated function system.  Evidently the self-similar measure has $\mu=(2^{N}L+1)^{-1}$.  It is easy to prove that the construction of a self-similar resistance form from Section~\ref{sec:Fractalsandresolvent} works with $r=(2L+1)^{-1}$.  One way to do so is to consider a function on $V_{0}$ with value $a_{i}$ at $q_{i}$ and $\sum_{i}a_{i}=0$, and suppose it extends so the value at the corresponding point of the central cube of $X_{1}$ is $b_{i}$. One verifies that each string of $L$ cubes in $X_{1}$ from the central cube to $q_{i}$ contributes $2^{N-1} L^{-1}(a_{i}-b_{i})^{2}$ to the $\DF_{1}$ form while the central cube contributes $\sum_{j<k}(a_{i}-b_{i})^{2}$.  Minimizing over the $b_{i}$ gives $\sum_{k\neq i}(b_{i}-b_{k})=2^{N-1}L^{-1}(a_{i}-b_{i})$ for each $i$; this has unique solution $b_{i}=(2L+1)^{-1}a_{i}$, which gives $\DF_{1}=(2L+1)^{-1}\DF_{0}$.  Note that this implies the resistance metric is comparable to the Euclidean metric.  Since the minimal extension of a constant function is constant we have also obtained a description of all harmonic functions.

The upper heat kernel estimates on \VLN\ depend on $L$ and $N$.  In the simplest case ($N=2$, $L=1$) they were proved in~\cite{Krebs}, while the version we need follows by applying standard results (such as those in~\cite{GrigoryanTelcs}) to some estimates proved in~\cite{B}. On \VLN\ they have the form provided in Section~\ref{sec:Fractalsandresolvent} with $\gamma=1$ and  $D=\frac{\log(2^{N}L+1)}{\log(2L+1)}$.

A significant feature of this class of examples is that by sending $N\to\infty$ we have $D\to\infty$, thus there is a $D$ for which the condition $s(D+1)-\frac{D}{p}>2$ is satisfied as soon as $sp>1$.  Our statement about when \Wsp\ is an algebra is as follows.
\begin{theorem}\label{thm:VNLexamples}
On \VLN, \Wsp\ is not an algebra if $p\in(1,\infty]$ and $s\in(1/2,1)$ with $s\log((2^{N}L+1)(2L+1))-\frac{1}{p}\log(2^{N}L+1) >2\log(2L+1)$.
\end{theorem}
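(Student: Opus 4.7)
The plan is to reduce the result directly to the general algebra-failure corollary proved in Section~\ref{Section:algebraprop}, which states that $\Wsp$ fails to be an algebra whenever $p\in(1,\infty]$, $s\in(1/2,1)$ and $s(D+1) - \frac{D}{p} > 2$. Once this reduction is justified, only a bookkeeping calculation is required.

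First I would confirm that $\VLN$ satisfies the standing hypotheses of Section~\ref{sec:Fractalsandresolvent}, so that Theorem~\ref{thm:nosquares} and Proposition~\ref{prop:DFasdeltam}, and hence the corollary from Section~\ref{Section:algebraprop}, actually apply. The iterated function system defining $\VLN$ is post-critically finite, and the computation sketched just before the theorem produces a self-similar regular resistance form with renormalization constant $r = (2L+1)^{-1}$; moreover this form has resistance metric comparable to the Euclidean metric, which lets us take $\gamma = 1$. The self-similar measure satisfies $\mu = (2^{N}L+1)^{-1}$ and is Ahlfors regular of dimension $D = \log(2^{N}L+1)/\log(2L+1)$. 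Finally the sub-Gaussian heat kernel upper bound~\eqref{eqn:HKests} with these $\gamma$ and $D$ follows from the estimates in~\cite{B} combined with the transfer results of~\cite{GrigoryanTelcs}, as indicated in the paragraph preceding the theorem.

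It then suffices to translate $s(D+1) - \frac{D}{p} > 2$ into the form appearing in the theorem. Using
\[
	D+1 = \frac{\log\bigl((2^{N}L+1)(2L+1)\bigr)}{\log(2L+1)}
\]
and multiplying the inequality through by $\log(2L+1)$ yields exactly
\[
	s\log\bigl((2^{N}L+1)(2L+1)\bigr) - \frac{1}{p}\log(2^{N}L+1) > 2\log(2L+1),
\]
which is the hypothesis of the theorem. The corollary of Section~\ref{Section:algebraprop} then gives the conclusion immediately.

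The statement itself carries no real analytic obstacle, since all of the heavy lifting (kernel estimates, local difference estimates, Theorem~\ref{thm:nosquares}, and Proposition~\ref{prop:DFasdeltam}) has already been carried out in general. The one point that is cited rather than proved is the sub-Gaussian heat kernel estimate for the entire family $\VLN$, and I would expect the reader who wants full details to consult~\cite{B,GrigoryanTelcs}; everything else is a substitution.
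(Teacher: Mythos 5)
Your proposal is correct and is exactly the argument the paper intends: verify the standing hypotheses of Section~\ref{sec:Fractalsandresolvent} on \VLN\ (with $r=(2L+1)^{-1}$, $\mu=(2^{N}L+1)^{-1}$, $\gamma=1$, $D=\log(2^{N}L+1)/\log(2L+1)$, and the heat kernel bounds cited from~\cite{B,GrigoryanTelcs}), then apply the corollary of Section~\ref{Section:algebraprop} and observe that multiplying $s(D+1)-\frac{D}{p}>2$ by $\log(2L+1)$ gives precisely the stated inequality. Nothing is missing, and your algebraic translation matches the paper's.
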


In particular, if $p<\infty$ and $sp>1$ or if $p=\infty$ and $s>1/2$ we can take $N$ so large that the last condition holds, proving the next result.
\begin{theorem}\label{thm:VNLexamplesDlimit}
If  $p\in(1,\infty]$ and $s\in(1/2,1)$ with $sp>1$ there is $N$ such that \Wsp\ on \VLN\ is not an algebra. 
\end{theorem}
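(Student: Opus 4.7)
The plan is simply to reduce Theorem~\ref{thm:VNLexamplesDlimit} to Theorem~\ref{thm:VNLexamples} by checking that the analytic inequality in the latter can be arranged to hold for some~$N$ under the hypotheses. So I would fix any convenient $L$ (say $L=1$) and study the inequality
\begin{equation*}
	s\log\bigl((2^{N}L+1)(2L+1)\bigr)-\tfrac{1}{p}\log(2^{N}L+1) >2\log(2L+1)
\end{equation*}
as a function of the free parameter~$N$, treating the cases $p<\infty$ and $p=\infty$ separately.

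For $p\in(1,\infty)$, I would rearrange the inequality as
\begin{equation*}
	\Bigl(s-\tfrac{1}{p}\Bigr)\log(2^{N}L+1) > (2-s)\log(2L+1),
\end{equation*}
noting that the hypothesis $sp>1$ is equivalent to $s-\tfrac{1}{p}>0$. Since $\log(2^{N}L+1)\to\infty$ as $N\to\infty$ while the right-hand side is a fixed constant (once $L$ is fixed), the inequality holds for all sufficiently large~$N$. For $p=\infty$ the $1/p$ term vanishes, so the condition reduces to $s\log((2^{N}L+1)(2L+1))>2\log(2L+1)$, which again follows once $N$ is taken large enough because $s>1/2>0$ and the left side tends to infinity.

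Having produced an~$N$ for which Theorem~\ref{thm:VNLexamples} applies, the conclusion that $\Wsp$ is not an algebra on $\VLN$ follows immediately. The argument is really just asymptotics, so there is no substantive obstacle: the role of the assumption $s>1/2$ is only to enter Theorem~\ref{thm:VNLexamples} (where it is needed via Proposition~\ref{prop:DFasdeltam}), and the role of $sp>1$ is exactly to make the coefficient of the dominant $N$-dependent term in the displayed inequality positive. The only minor bookkeeping is to keep the two regimes $p<\infty$ and $p=\infty$ separate, since they correspond respectively to the coefficient $s-1/p$ being a strictly positive real number and to the $1/p$ term being absent altogether.
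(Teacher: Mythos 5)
Your proposal is correct and is essentially the paper's own argument: the paper proves Theorem~\ref{thm:VNLexamplesDlimit} by exactly this reduction to Theorem~\ref{thm:VNLexamples}, observing that the condition there is $s(D+1)-\frac{D}{p}>2$ with $D=\frac{\log(2^{N}L+1)}{\log(2L+1)}\to\infty$ as $N\to\infty$, so that $sp>1$ (equivalently $s-\frac1p>0$, or $s>0$ when $p=\infty$) makes it hold for all sufficiently large $N$. Your rearrangement $(s-\tfrac1p)\log(2^{N}L+1)>(2-s)\log(2L+1)$ is the same computation made explicit.
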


\section{Acknowledgements}
This research was undertaken while T.C. was employed by the Mathematical Sciences Institute at the Australian National University.  The second author thanks the MSI for its hospitality during the period in which this paper was written.


\begin{thebibliography}{99}

\bibitem{BBR}
   \newblock Nadine Badr, Fr\'{e}d\'{e}ric Bernicot and Emmanuel Russ,
   \newblock Algebra properties for Sobolev spaces -- applications to semilinear PDEs on manifolds,
   \newblock \emph{J. Anal. Math.} \textbf{118} (2012), no. 2, 509--544.

\bibitem{B}
     \newblock   Martin T. Barlow,
     \newblock   Which values of the volume growth and escape time exponent are possible for a graph?,
    \newblock  \emph{Rev. Mat. Iberoamericana}, \textbf{20} (2004), 1--31.

\bibitem{BP}
     \newblock   Martin T. Barlow  and Edwin A. Perkins, 
     \newblock    Brownian motion on the Sierpi\'nski gasket,
    \newblock  \emph{Probab. Theory Related Fields}, \textbf{79} (1988), 543--623.

\bibitem{BST} 
   \newblock    Oren Ben-Bassat, Robert S. Strichartz  and Alexander Teplyaev,
   \newblock     What is not in the domain of the Laplacian on Sierpinski gasket type fractals,
  \newblock      \emph{J. Funct. Anal.,} \textbf{166} (1999), 197--217.

\bibitem{BCF}
   \newblock Fr\'{e}d\'{e}ric Bernicot, Thierry Coulhon and Dorothee Frey,
   \newblock Sobolev algebras through heat kernel estimates,
   \newblock Preprint, 2015.

\bibitem{CS}
   \newblock F. Cipriani and J.-L. Sauvageot,
   \newblock Derivations as square roots of Dirichlet forms,
   \newblock  \emph{J. Funct. Anal.}  \textbf{201} (2003), 78--120.

\bibitem{CRT}
   \newblock Thierry Coulhon, Emmanuel Russ and Val\'{e}rie Tardivel-Nachef,
   \newblock  Sobolev algebras on Lie groups and Riemannian manifolds,
   \newblock  \emph{Amer. J. Math.} \textbf{123} (2001), no. 2, 283--342. 

\bibitem{FHK}
     \newblock  Pat J. Fitzsimmons, Ben M. Hambly  and Takashi  Kumagai, 
     \newblock   Transition density estimates for {B}rownian motion on affine
              nested fractals,
    \newblock    \emph{Comm. Math. Phys.}, \textbf{165} (1994), 595--620.

\bibitem{FOT} 
     \newblock  Masatoshi Fukushima  and Tadashi Shima, 
    \newblock   On a spectral analysis for the {S}ierpi\'nski gasket,
  \newblock   \emph{Potential Anal.}, \textbf{1} (1992),   1--35.


\bibitem{GrigoryanTelcs} 
   \newblock     Alexander Grigor'yan and Andras Telcs, 
   \newblock     Two-sided estimates of heat kernels on metric measure spaces,
   \newblock     \emph{Ann. Probab.} \textbf{40} (2012), 1212--1284.

\bibitem{GK}
   \newblock Archil Gulisashvili and Mark Kon,
   \newblock Exact smoothing properties of Schr\"{o}dinger semigroups,
   \newblock  \emph{Amer. J. Math.} \textbf{118} (1996), no. 6, 1215--1248. 

\bibitem{H}
   \newblock John~E. Hutchinson,
   \newblock Fractals and self-similarity,
   \newblock  \emph{Indiana Univ. Math. J.} \textbf{30} (1981), no.~5, 713--747.

\bibitem{HuZ1}
	\newblock Jiaxin Hu and Martina Z\"{a}hle
	\newblock Potential spaces on fractals,
	\newblock \emph{Studia Math.} \textbf{170} (2005), no.~3, 259--281.

\bibitem{HuZ2}
	\newblock Jiaxin Hu and Martina Z\"{a}hle
	\newblock Generalized {B}essel and {R}iesz potentials on metric measure spaces,
	\newblock \emph{Potential Anal.} \textbf{30} (2009), no.~4, 315--340.

\bibitem{IRT}
   \newblock Marius Ionescu, Luke~G. Rogers, and Alexander Teplyaev,
   \newblock Derivations and  {D}irichlet forms on fractals,
   \newblock   \emph{J. Funct. Anal.} \textbf{263} (2012), no.~8,  2141--2169.

\bibitem{KP}
   \newblock Tosio Kato and Gustavo Ponce,
   \newblock Commutator estimates and the Euler and Navier-Stokes equations,
   \newblock  \emph{Comm. Pure Appl. Math.} \textbf{41} (1988), no. 7, 891–907. 

\bibitem{Krebs} 
   \newblock     William B. Krebs,
   \newblock     A diffusion defined on a fractal state space,
   \newblock     \emph{Stochastic Process. Appl.}, \textbf{37} (1991) 199--212.

\bibitem{Kigamibook} 
   \newblock    Jun Kigami, 
   \newblock      \emph{Analysis on Fractals,}
   \newblock    Cambridge Tracts in Mathematics,  143, Cambridge,  2001.

\bibitem{KigamiJFA03} 
    \newblock  Jun  Kigami, 
    \newblock   Harmonic analysis for resistance forms,
    \newblock    \emph{J. Funct. Anal.}, \textbf{204} (2003), 399--444.

\bibitem{RST}
  \newblock  	Luke G. Rogers,  Robert S. Strichartz and Alexander  Teplyaev,
   \newblock   Smooth bumps, a Borel theorem and partitions of smooth functions on P.C.F. fractals,
  \newblock   	  \emph{Trans. Amer. Math. Soc.}, \textbf{361} (2009), 1765--1790.


\bibitem{Strich}
   \newblock Robert~S. Strichartz,
   \newblock Multipliers on fractional Sobolev spaces,
   \newblock  \emph{J. Math. Mech.} \textbf{16} (1967), 1031--1060.

\bibitem{S}
    \newblock  Robert S. Strichartz,
     \newblock  Function spaces on fractals,
   \newblock   \emph{J. Funct. Anal.},   \textbf{198} (2003), 43--83.


\bibitem{bobsbook}
      \newblock  Robert S. Strichartz,
     \newblock   \emph{Differential Equations on Fractals, a tutorial},
     \newblock    Princeton University Press, Princeton, NJ, 2006.

\end{thebibliography}
\end{document}